\documentclass[12pt]{article}
\usepackage{amsfonts,amssymb,amsthm,amsmath,cite,bbm}
\usepackage{mathptmx}
\usepackage{newtxtext}
\usepackage{enumerate}
\usepackage{enumitem}
\usepackage{hyperref}
\usepackage{comment}
\usepackage{tgpagella}

\usepackage{geometry}
 \geometry{a4paper, margin=3.2cm}

\hypersetup{
	colorlinks,%
	citecolor=black,%
	filecolor=black,%
	linkcolor=black,%
	urlcolor=black
}

\newtheorem{theorem}{Theorem}[section]
\newtheorem{lemma}[theorem]{Lemma}

\newtheorem{corollary}[theorem]{Corollary}
\theoremstyle{remark}
\newtheorem{remark}[theorem]{Remark}
\numberwithin{equation}{section}

\newcommand{\N}{{\mathbb N}} 
\newcommand{\R}{{\mathbb R}}
\newcommand{\Rn}{{\mathbb R}^n}
\newcommand{\s}{\mathbb{S}}
\newcommand{\sn}{{\mathbb{S}^{n-1}}}

\newcommand{\K}{{\mathcal K}}
\newcommand{\Kn}{{\mathcal K}^n}

\newcommand{\hm}{\mathcal H}
\newcommand{\stirling}[2]{\genfrac{[}{]}{0pt}{}{#1}{#2}} 
\newcommand{\diffconv}{\mathbin{\diamond}} 

\newcommand{\fconvs}{{\mbox{\rm Conv}_{{\rm sc}}(\R^n)}} 
\newcommand{\fconvsH}{{\mbox{\rm Conv}_{{\rm sc}}(\R^{n-1})}} 
\newcommand{\fconvf}{{\mbox{\rm Conv}(\R^n; \R)}}
\newcommand{\proj}{\operatorname{proj}}
\newcommand{\gnom}{\operatorname{gno}} 

\newcommand{\infconv}{\mathbin{\Box}} 
\newcommand{\sq}{\mathbin{\vcenter{\hbox{\rule{.3ex}{.3ex}}}}} 

\newcommand{\MA}{\text{\rm MA}} 
\newcommand{\MAp}{\text{\rm MA}^{\!*}} 

\DeclareMathOperator{\oZ}{\operatorname{Z}}
\DeclareMathOperator{\oVb}{\overline{\operatorname{V}}}

\newcommand{\oZZb}[2]{\overline{\operatorname{V}}_{#1,#2}} 


\newcommand{\Grass}[2]{\operatorname{G}(#2,#1)}

\newcommand{\SO}{\operatorname{SO}}
\newcommand{\oO}{\operatorname{O}}

\renewcommand{\d}{\,\mathrm{d}}
\newcommand{\Hess}{{\operatorname{D}}^2}
\newcommand{\ind}{{\rm\bf I}}
\newcommand{\dom}{\operatorname{dom}} 
\newcommand{\epi}{\operatorname{epi}} 
\newcommand{\cR}{\operatorname{\mathcal{R}}} 

\title{Additive Kinematic Formulas\\for Convex Functions}
\author{Daniel Hug, Fabian Mussnig, and Jacopo Ulivelli}

\date{}

\begin{document}
\maketitle

\begin{abstract}
We prove a functional version of the additive kinematic formula as an application of the Hadwiger theorem on convex functions together with a Kubota-type formula for mixed Monge--Amp\`ere measures. As an application, we give a new explanation for the equivalence of the representations of functional intrinsic volumes as singular Hessian valuations and as integrals with respect to mixed Monge--Amp\`ere measures. In addition, we obtain a new integral geometric formula for mixed area measures of convex bodies, where integration on $\SO(n-1)\times \oO(1)$ is considered.
\bigskip

{\noindent
{\bf 2020 AMS subject classification:} 52A22 (26B25, 52A20, 52A39, 52A41, 52B45)\\
{\bf Keywords:} integral geometry, additive kinematic formula, convex function, valuation, mixed Monge--Amp\`ere measure, mixed area measure
}
\end{abstract}

\section{Introduction and Statement of Results}
\label{se:introduction}
For $n\in\N$ we denote by $\Kn$ the set of \emph{convex bodies} in $\Rn$, i.e., the set of non-empty, compact, convex subsets. Among the central objects in convex geometry are the \emph{intrinsic volumes} $V_j\colon\Kn\to \R$, $0\leq j\leq n$, which are given as coefficients in the Steiner formula
$$\operatorname{vol}_n(K+r B^n) = \sum_{j=0}^n r^{n-j} \kappa_{n-j} V_j(K)$$
for $r>0$ and $K\in\Kn$. Here, $\operatorname{vol}_n$ denotes the $n$-dimensional volume (i.e., the Lebesgue measure on $\Rn$), $B^n$ is the Euclidean unit ball in $\Rn$, and $\kappa_i$ denotes the $i$-dimensional volume of the unit ball in $\R^i$. Furthermore, for $\lambda,\mu\geq 0$ and $K,L\in\Kn$ we write
$$\lambda K+\mu L = \{\lambda x + \mu y : x\in K, y\in L\}$$
for the \emph{Minkowski sum} of the bodies $\lambda K$ and $\mu L$. The expression $2 V_{n-1}(K)$ gives the usual surface area of $K\in\Kn$ and if $\dim K\leq j$, i.e., if $K$ is contained in a $j$-dimensional affine subspace, then $V_j(K)$ is the usual $j$-dimensional volume of $K$ (we will thus use $V_j$ instead of $\operatorname{vol}_j$).

\medskip

Alternative but equivalent definitions of the intrinsic volumes can be given, for example, in terms of differential geometry (see \cite[(4.25), (4.26)]{Schneider_CB} or \cite[Theorem 4.9]{Hug_Weil_Lectures}) and integral geometry (see \cite[Remarks 5.1 and 5.5]{Hug_Weil_Lectures}). Another approach characterizes the operators $V_j$ by their unique properties, which we explain in the following. We call a map $\oZ\colon \Kn\to\R$ a \emph{valuation} if
$$\oZ(K\cup L)+\oZ(K\cap L)=\oZ(K)+\oZ(L)$$
for $K,L\in\Kn$ such that also $K\cup L\in\Kn$. The operator $\oZ$ is said to be \emph{translation invariant} if $\oZ(K+x)=\oZ(K)$ for $K\in\Kn$ and $x\in\Rn$ and it is \emph{rotation invariant} if $\oZ(\vartheta K)=\oZ(K)$ for $K\in\Kn$ and $\vartheta\in \SO(n)$. Here $\vartheta K = \{\vartheta x : x\in K\}$ and $\SO(n)$ denotes the special orthogonal group, i.e., the group of orientation preserving rotations of $\Rn$. The result below is due to Hadwiger \cite[Satz IV]{Hadwiger} and characterizes linear combinations of intrinsic volumes. Here and in the following, continuity of operators defined on $\Kn$ is understood with respect to the Hausdorff metric (see, for example, \cite[Section 1.8]{Schneider_CB} for details).

\begin{theorem}[Hadwiger's Theorem]
\label{thm:Hadwiger}
A map $\oZ\colon \Kn\to\R$ is a continuous, translation and rotation invariant valuation if and only if there exist $c_0,\ldots,c_n\in\R$ such that
$$\oZ(K)=\sum_{j=0}^n c_i V_i(K)$$
for $K\in\Kn$.
\end{theorem}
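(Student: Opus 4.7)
\medskip
\noindent\emph{Proof proposal.} The sufficiency direction is immediate once one knows the standard properties of the intrinsic volumes: each $V_j$ is continuous with respect to the Hausdorff metric, is translation and rotation invariant, and is a valuation (for instance via the Steiner formula and the inclusion–exclusion properties of $\operatorname{vol}_n$). So any linear combination is automatically of the required type, and the content of the theorem lies in the converse.

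For the necessity direction the plan is induction on the dimension $n$. The base case $n=1$ is elementary: a continuous translation invariant valuation on closed bounded intervals is determined by its values on $\{0\}$ and on $[0,t]$ for $t\ge 0$, the latter being additive in $t$ by the valuation property and hence linear in $t$ by continuity. This yields $\oZ=c_0V_0+c_1V_1$. For the induction step, I would fix $n\ge 2$ and consider the restriction of $\oZ$ to convex bodies lying in a fixed hyperplane $H\subset\Rn$, which is a continuous translation invariant valuation on $\mathcal K(H)\cong\mathcal K^{n-1}$ that is invariant under rotations of $H$ onto itself. By the induction hypothesis it is a combination $\sum_{j=0}^{n-1}c_j V_j^{H}$, and by the rotation and translation invariance of $\oZ$ on $\Rn$ the coefficients are independent of the hyperplane used. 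Subtracting $\sum_{j=0}^{n-1}c_jV_j$ from $\oZ$, the remaining valuation $\oZ'$ is continuous, translation and rotation invariant, and \emph{simple}, i.e.\ vanishes on all convex bodies of dimension $<n$.

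The decisive step is then to show that a continuous, translation and rotation invariant simple valuation $\oZ'\colon\Kn\to\R$ is a scalar multiple of $V_n=\operatorname{vol}_n$. I would first establish this on the class of convex polytopes, using the simplicity of $\oZ'$ together with the inclusion–exclusion principle to reduce the computation to the values of $\oZ'$ on simplices; rotation invariance and Minkowski additivity-type cut-and-paste arguments then force $\oZ'(P)=c_n V_n(P)$ on polytopes. Density of polytopes in $\Kn$ together with the continuity of $\oZ'$ and $V_n$ propagates the identity to all of $\Kn$. Going back through the induction we conclude $\oZ=\sum_{j=0}^n c_jV_j$ as desired.

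I expect the main obstacle to be the simple-valuation step just described: reducing from arbitrary polytopes to simplices and showing that the value on a simplex is proportional to its volume is the part that genuinely uses rotation invariance in a nontrivial way and requires a careful cell-decomposition argument (Hadwiger's original proof goes through cylinders and cubes and is subtle at precisely this point). By contrast, the induction set-up, the reduction to simple valuations, and the passage from polytopes to all convex bodies by continuity are essentially bookkeeping once the key lemma is in place.
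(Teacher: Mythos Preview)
The paper does not prove Theorem~\ref{thm:Hadwiger}: it is stated as a classical background result and attributed to Hadwiger \cite{Hadwiger} (see the sentence immediately preceding the theorem), so there is no proof in the paper to compare your proposal against.

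As for the proposal itself, your outline is the standard route (induction on $n$, reduction to a simple valuation, then identification of simple continuous translation and $\SO(n)$-invariant valuations with multiples of $V_n$), and you correctly flag the last of these as the substantive step. Your description of that step is vague, however: ``Minkowski additivity-type cut-and-paste arguments'' does not pin down an argument, and this is precisely where the real work lies in every known proof---Hadwiger's original via cylinders and cubes, or Klain's later simplification via the characterization of simple \emph{even} valuations together with a separate treatment of the odd part. If you intend to turn this into a complete proof you will need to commit to one of these routes and supply the details there; the surrounding scaffolding (the induction setup, the hyperplane restriction, and the passage from polytopes to general bodies by continuity) is fine.
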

Among its numerous applications, the strength of Theorem~\ref{thm:Hadwiger} is particularly evident in integral geometry, where it provides almost effortless proofs of formulas that involve integration of geometric quantities with respect to invariant measures. See, for example, \cite{Hug_Schneider_Survey,Hug_Weil_Lectures,KlainRota}. One such result is the following additive kinematic formula for which we refer to \cite[Theorem 5.13]{Hug_Weil_Lectures} (see \cite[Corollary 5.2]{Hug_Weil_Lectures} for a more general local version and \cite[Theorem 4.4.6]{Schneider_CB} for a different approach).

\begin{theorem}[Additive Kinematic Formula]
\label{thm:additive_kinematic_formula}
For $0\leq j\leq n$ and $K,L\in\Kn$,
\begin{equation}
\label{eq:additive_kinematic_formula}
\int_{\SO(n)} V_j(K+\vartheta L)\d \vartheta = \sum_{k=0}^j \frac{\binom{2n-j}{n-j} \kappa_{n-k}\kappa_{n+k-j}}{\binom{2n-j}{n-k} \kappa_n\kappa_{n-j}} V_k(K) V_{j-k}(L),
\end{equation}
where $\d \vartheta$ denotes integration with respect to the Haar probability measure on $\SO(n)$.
\end{theorem}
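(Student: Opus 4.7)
The strategy is to apply Hadwiger's theorem (Theorem~\ref{thm:Hadwiger}) twice, once in each variable, and then identify the resulting coefficients by specialising both $K$ and $L$ to Euclidean balls. For fixed $L\in\Kn$, consider
$$\oZ_L(K):=\int_{\SO(n)}V_j(K+\vartheta L)\d\vartheta,\qquad K\in\Kn.$$
Continuity in the Hausdorff metric follows from that of $V_j$ combined with dominated convergence, translation invariance from that of $V_j$, and rotation invariance from the left-invariance of the Haar probability measure on $\SO(n)$. For the valuation property one uses that whenever $K_1\cup K_2\in\Kn$ and $M\in\Kn$ one has $(K_1\cup K_2)+M=(K_1+M)\cup(K_2+M)$ as well as $(K_1\cap K_2)+M=(K_1+M)\cap(K_2+M)$, so that $K\mapsto V_j(K+\vartheta L)$ is a valuation for every fixed $\vartheta$, and this property survives integration. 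Theorem~\ref{thm:Hadwiger} therefore yields
$$\oZ_L(K)=\sum_{k=0}^n c_k(L)\,V_k(K).$$

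I then swap the roles of $K$ and $L$: the substitution $\vartheta\mapsto\vartheta^{-1}$ combined with the rotation invariance of $V_j$ gives $\oZ_L(K)=\oZ_K(L)$, so each $c_k$ is itself a continuous, translation and rotation invariant valuation on $\Kn$, and a second application of Hadwiger's theorem produces constants $c_{k,m}\in\R$ with
$$\oZ_L(K)=\sum_{k,m=0}^n c_{k,m}\,V_k(K)\,V_m(L).$$
A homogeneity argument trims the double sum to the antidiagonal $k+m=j$: replacing $K$ by $\lambda K$ and $L$ by $\mu L$, the multilinearity of mixed volumes shows that $V_j(\lambda K+\mu\vartheta L)$, and hence its integral, is a polynomial in $(\lambda,\mu)$ of total degree $j$ containing only the monomials $\lambda^i\mu^{j-i}$; matching this with $\sum c_{k,m}\lambda^k\mu^m V_k(K)V_m(L)$ as $K,L$ vary (e.g.\ over balls of positive radius) forces $c_{k,m}=0$ whenever $k+m\neq j$.

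To pin down $c_{k,j-k}$ I specialise to $K=rB^n$ and $L=sB^n$. Then $K+\vartheta L=(r+s)B^n$ for every $\vartheta$, so the left-hand side equals $V_j\bigl((r+s)B^n\bigr)=\binom{n}{j}\tfrac{\kappa_n}{\kappa_{n-j}}(r+s)^j$. Using $V_k(rB^n)=\binom{n}{k}\tfrac{\kappa_n}{\kappa_{n-k}}r^k$ on the right and comparing the coefficients of $r^ks^{j-k}$, I obtain
$$c_{k,j-k}=\frac{\binom{n}{j}\binom{j}{k}\,\kappa_{n-k}\kappa_{n+k-j}}{\binom{n}{k}\binom{n}{j-k}\,\kappa_n\kappa_{n-j}},$$
which coincides with the coefficient stated in the theorem thanks to the elementary identity $\binom{n}{j}\binom{j}{k}/\bigl(\binom{n}{k}\binom{n}{j-k}\bigr)=\binom{2n-j}{n-j}/\binom{2n-j}{n-k}$. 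The only genuinely delicate step is the verification of the valuation property of $\oZ_L$, since Minkowski addition does not in general distribute over intersection; this relies on the classical observation that distributivity does hold once $K_1\cup K_2$ is assumed convex, proved by a short connectedness argument on the segment joining witnesses $x_1\in K_1$, $x_2\in K_2$ of a common point in $(K_1+M)\cap(K_2+M)$.
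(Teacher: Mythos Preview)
Your proof is correct and is the classical route: a double application of Hadwiger's Theorem~\ref{thm:Hadwiger} followed by identification of the constants on balls. The only point one might tighten is the passage from the symmetry $\oZ_L(K)=\oZ_K(L)$ to the assertion that each coefficient $L\mapsto c_k(L)$ is itself a continuous rigid-motion invariant valuation; this is standard (choose bodies $K_0,\dots,K_n$ with $\det(V_k(K_i))\neq 0$, e.g.\ balls of distinct radii, and solve linearly for the $c_k$), but worth a sentence.

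The paper, however, does not prove Theorem~\ref{thm:additive_kinematic_formula} this way. It quotes the result from the literature and then \emph{recovers} it in Section~\ref{se:further_results} as a specialisation of the functional formula: starting from Theorem~\ref{thm:add_kin_form_map} (equivalently Theorem~\ref{thm:add_kin_form_ma}), one sets $v=\ind_L$ to obtain Corollary~\ref{cor:add_kin_ind}, and then also $u=\ind_K$, using Lemma~\ref{le:MAp_bodies} to collapse the mixed Monge--Amp\`ere integrals to intrinsic volumes. Thus the paper's derivation passes through the functional Hadwiger theorem (Theorem~\ref{thm:hadwiger_fconvs}) and the Kubota-type formula for conjugate mixed Monge--Amp\`ere measures (Lemma~\ref{le:ck_map}), whereas yours stays entirely within the classical theory of convex bodies. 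Your argument is shorter and self-contained; the paper's route is deliberately indirect, its point being to exhibit the classical formula as a shadow of the new functional one.
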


\medskip

The aim of this article is to establish a functional version of Theorem~\ref{thm:additive_kinematic_formula}. For this, we denote by $\fconvf$ the set of convex functions $v\colon\Rn\to\R$. In \cite{Colesanti-Ludwig-Mussnig-5}, functional analogs of the intrinsic volumes on $\fconvf$ were introduced and characterized in a Hadwiger-type theorem. For $v\in \fconvf \cap C^2(\Rn)$ and $j \in \{0, \dots,n \}$, these functional intrinsic volumes are of the form
\begin{equation}
\label{eq:func_intr_vol_hess}
v\mapsto \int_{\Rn} \zeta(|x|) [\Hess v(x)]_j \d x,
\end{equation}
where $|x|$ denotes the Euclidean norm of $x\in\Rn$, $\d x$ denotes integration with respect to Lebesgue measure on $\R^n$, and $\zeta\colon (0,\infty)\to\R$ is continuous with bounded support with a possible singularity at $0^+$ (see Section~\ref{se:singular_hessian} for details). Here, $\Hess v(x)$ denotes the Hessian matrix of $v$ at $x\in\Rn$ and we write $[A]_j$ for the $j$th elementary symmetric function of the eigenvalues of the symmetric matrix $A\in\R^{n\times n}$.

While \eqref{eq:func_intr_vol_hess} is easy to understand, it turns out that this representation of functional intrinsic volumes as singular Hessian integrals is not well suited for an additive kinematic formula. It was shown in \cite[Theorem 2.5]{Colesanti-Ludwig-Mussnig-7} that \eqref{eq:func_intr_vol_hess} can be rewritten as
$$v\mapsto \int_{\Rn} \alpha(|x|) \d\MA(v[j],h_{B^n}[n-j];x),$$
where $\alpha$ is a continuous function with compact support on $[0,\infty)$ that is obtained from $\zeta$ via an integral transform (see Section~\ref{se:singular_hessian}). Here, we write $h_K(x)=\sup\{ \langle x,y\rangle \colon y\in K\}$, $x\in\Rn$, for the \emph{support function} of $K\in\Kn$, where $\langle \cdot\,,\cdot\rangle$ denotes the standard inner product on $\Rn$, and we remark that $h_{B^n}(x)=|x|$ for $x\in\Rn$. Moreover, $\MA(w_1,\ldots,w_n;\cdot)$ denotes the \emph{mixed Monge--Amp\`ere measure} of the functions $w_1,\ldots,w_n\in\fconvf$ and in the equation above the function $v$ is repeated $j$ times and $h_{B^n}$ is repeated $n-j$ times. Under additional $C^2$ assumptions on its arguments, the mixed Monge--Amp\`ere measure is absolutely continuous with respect to the Lebesgue measure and takes the form
$$\d\MA(w_1,\ldots,w_n;x)=\det(\Hess w_1(x),\ldots,\Hess w_n(x))\d x,$$
where $\det\colon (\R^{n\times n})^n\to\R$ denotes the mixed discriminant. For a more precise definition of this measure, we refer to Section~\ref{se:preliminaries}.

For our purposes, we will thus consider the (renormalized) \emph{functional intrinsic volumes} $\oZZb{j}{\alpha}^*\colon \fconvf\to\R$ given by
\begin{equation}
\label{eq:ozzb_ma}
\oZZb{j}{\alpha}^*(v)=\binom{n}{j}\frac{1}{\kappa_{n-j}}\int_{\Rn} \alpha(|x|)\d \MA(v[j],h_{B^n}[n-j];x)
\end{equation}
for $v\in\fconvf$, where $0\leq j\leq n$, and $\alpha\in C_c({[0,\infty)})$. This particular choice of normalization has the advantage that
$$\oZZb{j}{\alpha}^*(h_K)= \alpha(0) V_j(K)$$
for $K\in\Kn$ (see, for example, \cite[Lemma 4.6]{Hug_Mussnig_Ulivelli_supports}). The Hadwiger theorem on the space $\fconvf$, which was first established in \cite[Theorem 1.5]{Colesanti-Ludwig-Mussnig-5}, is the following result. For the version stated below, see \cite[Theorem 2.6]{Colesanti-Ludwig-Mussnig-7}. See also \cite[Theorem 1.1]{Colesanti-Ludwig-Mussnig-8} and \cite[Theorem 1.2]{Knoerr_singular}.

For the statement of the result, we recall some terminology. Continuity of a functional $\oZ\colon \fconvf\to\R$
 is understood with respect to epi-convergence, which on $\fconvf$ is equivalent to pointwise convergence. The operator $\oZ$ is \emph{dually epi-translation invariant} if $\oZ(v+f)=\oZ(v)$ for $v\in\fconvf$ and affine functions $f$ on $\Rn$ and it is \emph{rotation invariant} if $\oZ(v\circ \vartheta^{-1})=\oZ(v)$ for $v\in\fconvf$ and $\vartheta\in \SO(n)$. Lastly, $\oZ$ is a \emph{valuation} if
 \begin{equation}
 \label{eq:def_val_func}
 \oZ(v\wedge w)+\oZ(v \vee w)=\oZ(v)+\oZ(w)
 \end{equation}
for $v,w\in\fconvf$ such that also $v\wedge w\in\fconvf$, where $v\wedge w$ and $v\vee w$ denote the pointwise minimum and maximum of $v$ and $w$, respectively.

\begin{theorem}
\label{thm:hadwiger_fconvf}
A functional $\oZ\colon \fconvf\to\R$ is a continuous, dually epi-translation and rotation invariant valuation if and only if there exist functions $\alpha_0,\ldots,\alpha_n\in C_c({[0,\infty)})$ such that
$$\oZ(v)=\sum_{j=0}^n \oZZb{j}{\alpha_j}^*(v)$$
for $v\in\fconvf$.
\end{theorem}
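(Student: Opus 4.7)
The proof splits into sufficiency and necessity, with the latter being the hard direction. For \emph{sufficiency}, I would verify each of the four properties of $\oZZb{j}{\alpha_j}^*$ directly. The valuation property follows from the multilinear valuation property of the mixed Monge--Amp\`ere measure in its $n$ function arguments; dual epi-translation invariance follows from $\Hess f \equiv 0$ for affine $f$, so that $\MA(v+f, h_{B^n}[n-j]; \cdot)=\MA(v, h_{B^n}[n-j]; \cdot)$ on smooth functions and by approximation in general; rotation invariance is immediate since $|x|$, $h_{B^n}$, and the Lebesgue measure are $\SO(n)$-invariant while the mixed Monge--Amp\`ere measure is $\SO(n)$-covariant; and continuity with respect to epi-convergence follows from the weak continuity of $\MA$ in each of its arguments.

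For \emph{necessity}, I would proceed in three steps. First, testing $\oZ$ on support functions yields the continuous, translation and rotation invariant valuation $K\mapsto \oZ(h_K)$ on $\Kn$, which by Theorem~\ref{thm:Hadwiger} equals $\sum_{j=0}^n c_j V_j(K)$ for some constants $c_j$. This pins down the values of $\oZ$ on support functions but not yet on all of $\fconvf$. Second, a McMullen-type homogeneous decomposition $\oZ=\sum_{j=0}^n \oZ_j$ would be established, where each $\oZ_j$ is homogeneous of degree $j$ with respect to an appropriate scaling on $\fconvf$, relying on the polynomial behavior of $\oZ$ along one-parameter families already guaranteed on support functions by the first step and then propagated to the full space by smoothing and density. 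Third, for each $\oZ_j$, now $j$-homogeneous, continuous, dually epi-translation and rotation invariant, one extracts a density $\alpha_j\in C_c([0,\infty))$ by polarizing to a symmetric multilinear form in $n$ function arguments and identifying this form, via a Riesz-type representation on the space of $\SO(n)$-invariant continuous test functions, with the mixed Monge--Amp\`ere pairing $v\mapsto \int_{\Rn}\alpha_j(|x|)\d\MA(v[j],h_{B^n}[n-j];x)$. Summing over $j$ yields the claimed representation, and the consistency condition $\oZZb{j}{\alpha_j}^*(h_K)=\alpha_j(0)V_j(K)$ matched against Step~1 forces $\alpha_j(0)=c_j$.

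The decisive obstacle is the third step: the space $\fconvf$ is much richer than $\Kn$, so Hadwiger-type uniqueness arguments do not transfer directly, and one needs the structural results on $\SO(n)$-equivariant functionals together with the density of smooth, strictly convex test functions in $\fconvf$ to move between abstract valuations and integral representations against mixed Monge--Amp\`ere measures. This technical substrate, developed in the cited works \cite{Colesanti-Ludwig-Mussnig-5,Colesanti-Ludwig-Mussnig-7,Colesanti-Ludwig-Mussnig-8,Knoerr_singular}, is where the bulk of the work lies.
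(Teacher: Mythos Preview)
The paper does not prove Theorem~\ref{thm:hadwiger_fconvf}. It is stated there as a known result, attributed to \cite[Theorem~1.5]{Colesanti-Ludwig-Mussnig-5} with the present formulation taken from \cite[Theorem~2.6]{Colesanti-Ludwig-Mussnig-7}; see also \cite{Colesanti-Ludwig-Mussnig-8,Knoerr_singular}. The paper \emph{uses} this theorem (and its equivalent version Theorem~\ref{thm:hadwiger_fconvs} on $\fconvs$) as a tool in the proof of Theorem~\ref{thm:add_kin_form_map}, but it contains no proof of the Hadwiger theorem itself. So there is no ``paper's own proof'' to compare your proposal against.

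That said, your outline is a plausible high-level sketch of how the proof in the cited references proceeds: sufficiency is indeed routine, and necessity does go through a homogeneous decomposition followed by an analysis of each homogeneous component. A couple of caveats. Your Step~1 (restricting to support functions and invoking the classical Hadwiger theorem) does not play the organizing role you suggest; the constants $c_j$ it produces do not drive the argument, and the consistency check $\alpha_j(0)=c_j$ at the end is a byproduct rather than a constraint used in the proof. More importantly, your Step~3 substantially understates the difficulty: the actual argument in \cite{Colesanti-Ludwig-Mussnig-5} does not proceed by a direct Riesz representation on rotation-invariant test functions but rather through a Klain-type injectivity argument on lower-dimensional restrictions together with an Abel-type integral transform, and the fact that the resulting density lies in $C_c([0,\infty))$ (in the formulation \eqref{eq:ozzb_ma}) rather than in the singular class $D_j^n$ is itself a nontrivial reformulation established only in \cite{Colesanti-Ludwig-Mussnig-7}. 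Your final paragraph correctly flags that the real work lives in these references, but the mechanism you describe for Step~3 is not quite the one used there.
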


We will use the functional Hadwiger theorem together with a Kubota-type formula for (conjugate) mixed Monge--Amp\`ere measures (see Lemma~\ref{le:ck_map}) to prove the following functional counterpart of Theorem~\ref{thm:additive_kinematic_formula}.

\begin{theorem}
	\label{thm:add_kin_form_ma}
	If $0\leq j\leq n$ and $\alpha\colon [0,\infty)\to [0,\infty)$ is measurable, then
	\begin{align}
	    \begin{split}
	        \label{eq:add_kin_form_ma}
	   \kappa_n \int_{\SO(n)} \int_{\Rn} \alpha(|y|) \d&\MA(v + (w\circ \vartheta^{-1})[j],h_{B^n}[n-j];y) \d\vartheta\\
	=\sum_{i=0}^j \binom{j}{i} \int_{\Rn}\int_{\Rn}& \alpha(\max\{|x|,|y|\})\\
 &\d\MA(w[j-i],h_{B^n}[n-j+i];y)\d\MA(v[i],h_{B^n}[n-i];x)
	    \end{split}
	\end{align}
	for $v,w\in\fconvf$.
\end{theorem}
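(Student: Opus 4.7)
My strategy is to combine the functional Hadwiger theorem (Theorem~\ref{thm:hadwiger_fconvf}), applied with $v$ as the varying argument, with the Kubota-type formula for mixed Monge--Amp\`ere measures (Lemma~\ref{le:ck_map}) announced in the introduction. After reducing to $\alpha \in C_c([0,\infty))$ by monotone convergence (approximating a non-negative measurable $\alpha$ from below by continuous compactly supported functions), I would first use the multilinearity of the mixed MA measure to expand
\begin{equation*}
\MA((v+w\circ\vartheta^{-1})[j], h_{B^n}[n-j]; \cdot) = \sum_{i=0}^j \binom{j}{i}\, \MA(v[i], (w\circ\vartheta^{-1})[j-i], h_{B^n}[n-j]; \cdot),
\end{equation*}
thereby splitting the left-hand side of \eqref{eq:add_kin_form_ma} into $j+1$ pieces indexed by $i$. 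This coincides with the expansion of the left-hand side into its $v$-homogeneous components under $v \mapsto \lambda v$; since the right-hand side is already in that form, it suffices to verify the identity term-by-term for each $i \in \{0, \ldots, j\}$.

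For fixed $i$, $w$, and $\alpha$, both sides of the reduced identity, viewed as functionals of $v$, are continuous, dually epi-translation invariant, rotation invariant valuations on $\fconvf$ that are homogeneous of degree $i$ in $v$. The valuation and continuity properties follow from those of mixed MA measures; dual epi-translation invariance holds since Hessians ignore affine summands; rotation invariance in $v$ for the left-hand side follows from the Haar substitution $\vartheta \mapsto \eta^{-1}\vartheta$ (together with rotation invariance of $h_{B^n}$ and of $|\cdot|$), while for the right-hand side it follows from the fact that $x \mapsto \int \alpha(\max\{|x|,|y|\})\,\d\MA(w[j-i], h_{B^n}[n-j+i]; y)$ is radial in $x$ and $\MA(v \circ \eta^{-1}[i], h_{B^n}[n-i]; \cdot)$ is the $\eta$-pushforward of $\MA(v[i], h_{B^n}[n-i]; \cdot)$. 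Theorem~\ref{thm:hadwiger_fconvf} combined with degree-$i$ homogeneity then implies that the left-hand side equals $\oZZb{i}{\beta_i^L}^*(v)$ and the right-hand side equals $\oZZb{i}{\beta_i^R}^*(v)$ for certain $\beta_i^L, \beta_i^R \in C_c([0,\infty))$ depending on $w$, and from \eqref{eq:ozzb_ma} one reads off immediately
\begin{equation*}
\beta_i^{R}(s) = \frac{\kappa_{n-i}}{\binom{n}{i}} \int_{\Rn} \alpha(\max\{s, |y|\})\,\d\MA(w[j-i], h_{B^n}[n-j+i]; y).
\end{equation*}

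The main obstacle, and the step where the Kubota-type formula enters, is the corresponding identification of $\beta_i^{L}$: one needs to show that the $\SO(n)$-average
\begin{equation*}
\kappa_n \int_{\SO(n)} \MA(v[i], (w \circ \vartheta^{-1})[j-i], h_{B^n}[n-j]; \cdot)\,\d\vartheta
\end{equation*}
factorizes as a product-type double integration against the separate mixed MA measures $\MA(v[i], h_{B^n}[n-i]; \cdot)$ and $\MA(w[j-i], h_{B^n}[n-j+i]; \cdot)$, with the $\max\{|x|,|y|\}$ kernel emerging naturally from this factorization. This is the functional analog of the classical fact that the $\SO(n)$-average of the mixed area measure of $K + \vartheta L$ decouples into data about $K$ and $L$ separately. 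Once Lemma~\ref{le:ck_map} supplies this factorization, the comparison $\beta_i^L = \beta_i^R$ is immediate and the theorem follows.
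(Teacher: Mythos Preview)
Your overall strategy matches the paper's: reduce to $\alpha\in C_c([0,\infty))$, apply the functional Hadwiger theorem, and identify the resulting densities. The paper works on the dual side $\fconvs$ (proving the equivalent Theorem~\ref{thm:add_kin_form_map}) and applies Hadwiger twice, once in each variable, rather than once after the multilinear expansion you propose; this difference is cosmetic, since your expansion already isolates the homogeneous components.

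The gap is in your last paragraph. The Kubota-type formula (Lemma~\ref{le:ck_map}) does \emph{not} by itself produce the factorization you describe: it rewrites an integral with $\ind_{B^n}$ entries as a Grassmannian average of projected measures, and no $\max\{|x|,|y|\}$ kernel appears anywhere in that statement. The paper determines the densities by evaluating both sides on the explicit test family $u_t(x)=t|x|+\ind_{B^n}(x)$ (more precisely on $\lambda\sq u_s$ and $\mu\sq u_t$), using Lemma~\ref{le:int_map_ut} to recover each density pointwise from these evaluations. The Kubota formula then serves only as the computational tool that makes the left-hand side tractable for these particular inputs: after projection to a $j$-plane, $(\lambda\sq u_s)\infconv(\mu\sq u_t)$ is radially piecewise linear with gradient moduli $s$ and $t$ on concentric annuli, so the integral of $\alpha(|\nabla\cdot|)$ becomes a polynomial in $\lambda,\mu$ whose coefficients involve $\alpha(s)$ or $\alpha(t)$ according to whether $s\le t$ or $s\ge t$; matching coefficients with the double Hadwiger expansion forces $\kappa_n\,\alpha_{i,j-i}(s,t)=\binom{j}{i}\alpha(\max\{s,t\})$. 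That piecewise computation is where the $\max$ kernel actually arises, and your proposal leaves precisely this step as an unverified assertion.
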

Observe that the left side of \eqref{eq:add_kin_form_ma} can be rewritten as a multiple of
$$\int_{\SO(n)} \oZZb{j}{\alpha}^*(v+(w\circ \vartheta^{-1})) \d\vartheta,$$
which resembles \eqref{eq:additive_kinematic_formula}. However, in general, the right side of \eqref{eq:add_kin_form_ma} is not a sum of products of functional intrinsic volumes. A case in which this is possible is given by Corollary~\ref{cor:add_kin_ind} below.
In Section~\ref{se:further_results} we also show how Theorem~\ref{thm:additive_kinematic_formula} can be retrieved from Theorem~\ref{thm:add_kin_form_ma} and treat further consequences, such as formulas for functional analogs of mixed volumes (Corollary~\ref{cor:func_mix}) or analytic versions of the Minkowski difference (Corollary~\ref{cor:diffconv}).

As a further application of Theorem~\ref{thm:add_kin_form_ma}, we establish a novel explanation of the aforementioned equivalence between \eqref{eq:ozzb_ma} and the singular Hessian integrals \eqref{eq:func_intr_vol_hess} in Section~\ref{se:singular_hessian}.

\medskip

Lastly, in Section~\ref{se:formulas_bodies} we study the implications of Theorem~\ref{thm:add_kin_form_ma} for mixed area measures of convex bodies. We write $S_{n-1}(K,\cdot)$ for the \emph{surface area measure} of $K\in\Kn$, which is a Borel measure on the unit sphere $\sn$. For a body $K$ of dimension $n$ and a Borel set $\omega\subseteq \sn$, the expression $S_{n-1}(K,\omega)$ gives the ${(n-1)}$-dimensional Hausdorff measure, denoted by $\hm^{n-1}$, of all boundary points $x\in \partial K$ at which $K$ has an outer unit normal in $\omega$ (we refer to \cite[Section 4.2]{Schneider_CB} for a detailed description). The coefficients $S(K_{i_1},\ldots,K_{i_{n-1}},\cdot)$ in the polynomial expansion
\begin{equation*}
S_{n-1}(\lambda_1 K_1+\cdots+\lambda_m K_m,\cdot)=\sum_{i_1,\ldots,i_{n-1}=1}^m \lambda_{i_1}\cdots \lambda_{i_{n-1}} S(K_{i_1},\ldots, K_{i_{n-1}},\cdot)
\end{equation*}
for $m\in\N$, $\lambda_1,\ldots,\lambda_m\geq 0$, and $K_1,\ldots,K_m\in\Kn$, are the \emph{mixed area measures} of the bodies $K_{i_1},\ldots,K_{i_{n-1}}$. For $0 \leq j \leq n-1$ we consider measures of the form $S(K [j], B_H^{n-1}[n-1-j],\cdot)$, where the body $K\in\Kn$ is repeated $j$ times and the $(n-1)$-dimensional unit ball
$$B_H^{n-1}=B^n \cap e_n^\perp=\{(x_1,\ldots,x_n)\in \Rn : x_1^2+\cdots +x_{n-1}^2 \leq 1, x_n=0\}$$
is repeated $(n-1-j)$ times. Here, $e_n$ denotes the $n$th basis vector of the standard orthonormal basis of $\Rn$ and we write $H=e_n^\perp$. The authors studied these measures and their connection with mixed Monge--Amp\`ere measures and functional intrinsic volumes in more detail in \cite{Hug_Mussnig_Ulivelli_supports}.

For $n\ge 2$, we identify $\SO(n-1)$ as the group of rotations that fix $e_n$ and $\oO(1)$ as the group that consists of the identity and $\operatorname{diag}(1,\ldots,1,-1)$. We use Theorem~\ref{thm:add_kin_form_ma} to prove the following result, where integration on $\SO(n-1)\times \oO(1)$ is with respect to the Haar probability measure. In addition, we write $z_n=\langle z,e_n \rangle$, for $z\in\sn$.

\begin{theorem}
\label{thm:formula_mx_area_meas}
Let $n\geq 2$. If $0\leq j\leq n-1$ and $\beta \colon [0,1]\to [0,\infty)$ is measurable, then
\begin{align}\label{eq:thm_kin_bod}
    &\int_{\SO(n-1)\times \oO(1)} \int_{\sn} |z_n| \beta(|z_n|) \d S((K+\eta L) [j],B_H^{n-1}[n-1-j],z) \d\eta \notag\\
&\quad= \frac{1}{2\kappa_{n-1}} \sum_{i=0}^j \binom{j}{i} \int_{\sn}\int_{\sn} |w_n| |z_n| \beta(\min\{|w_n|,|z_n|\})\notag\\
&\qquad\qquad \d S(L [j-i],B_H^{n-1}[n-1-j+i],w) \d S(K [i],B_H^{n-1}[n-1-i],z)
\end{align}
for $K,L\in\Kn$.
\end{theorem}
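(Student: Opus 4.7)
The plan is to derive Theorem~\ref{thm:formula_mx_area_meas} from Theorem~\ref{thm:add_kin_form_ma} by applying the functional kinematic formula to the support functions $v=h_K$ and $w=h_L$ (both elements of $\fconvf$) and then translating the resulting identity from mixed Monge--Amp\`ere measures on $\Rn$ into the desired identity for mixed area measures on $\sn$ via a transfer lemma from the authors' companion paper \cite{Hug_Mussnig_Ulivelli_supports}. Since $h_K+h_L\circ\vartheta^{-1}=h_{K+\vartheta L}$, the left-hand side of \eqref{eq:add_kin_form_ma} becomes the $\SO(n)$-average of $\int_{\Rn}\alpha(|y|)\d\MA(h_{K+\vartheta L}[j],h_{B^n}[n-j];y)$, while after expansion by multilinearity the right-hand side is a sum of products of the form $\int_{\Rn}\alpha(|x|)\d\MA(h_K[i],h_{B^n}[n-i];x)$ and $\int_{\Rn}\alpha(|y|)\d\MA(h_L[j-i],h_{B^n}[n-j+i];y)$.

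The key transfer takes the schematic form
\begin{equation*}
\int_{\Rn}\alpha(|x|)\d\MA(h_M[i],h_{B^n}[n-i];x)\;=\;c_{n,i}\int_{\sn}|z_n|\,\widetilde\alpha(|z_n|)\d S(M[i],B_H^{n-1}[n-1-i],z),
\end{equation*}
where $\widetilde\alpha$ arises from $\alpha$ via the reciprocal substitution $|z_n|=1/|x|$ coming from the gnomonic chart from $\sn$ to a hyperplane parallel to $H$. Geometrically, one $h_{B^n}$-factor on the left produces the weight $|z_n|$ on the sphere (a Cauchy-type projection to $H$), while the remaining $(n-1-i)$ copies of $h_{B^n}$ collapse into $(n-1-i)$ copies of $B_H^{n-1}$. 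Choosing $\alpha$ so that $\widetilde\alpha=\beta$ and noting that the reciprocal substitution turns $\max\{|x|,|y|\}$ into $1/\min\{|w_n|,|z_n|\}$, I would apply the transfer to every integral appearing in \eqref{eq:add_kin_form_ma}: this converts the right-hand side precisely into the sum on the right of \eqref{eq:thm_kin_bod}, up to the constant $\frac{1}{2\kappa_{n-1}}$ that collects the transfer constants against the $\kappa_n$ on the left of \eqref{eq:add_kin_form_ma}.

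It remains to reduce the resulting $\SO(n)$-average of $\int_{\sn}|z_n|\beta(|z_n|)\d S((K+\vartheta L)[j],B_H^{n-1}[n-1-j],z)$ to the $(\SO(n-1)\times\oO(1))$-average in \eqref{eq:thm_kin_bod}. I would expand this mixed area measure via multilinearity and use the rotation equivariance of mixed area measures together with the $(\SO(n-1)\times\oO(1))$-invariance of $B_H^{n-1}$ and of the weight $|z_n|\beta(|z_n|)$; a disintegration of Haar measure on $\SO(n)$ over the homogeneous space $\SO(n)/(\SO(n-1)\times\oO(1))\cong\sn/\{\pm\}$ combined with Fubini then collapses the $\SO(n)$-Haar integration to the subgroup integration. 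The main obstacle will be the precise formulation and iteration of the transfer lemma, and the matching of normalization constants ($\kappa_n$, $\kappa_{n-1}$, and the factor $1/2$); the subgroup reduction should then follow routinely from equivariance and disintegration.
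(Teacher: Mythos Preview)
Your proposal has a genuine gap at the very first step. Plugging $v=h_K$ and $w=h_L$ into Theorem~\ref{thm:add_kin_form_ma} in dimension $n$ collapses everything: by Lemma~\ref{le:MAp_bodies} (equivalently, $\MA(h_{M_1},\ldots,h_{M_n};\cdot)=V(M_1,\ldots,M_n)\,\delta_o$), each measure $\MA(h_M[i],h_{B^n}[n-i];\cdot)$ is a constant multiple of $\delta_o$, so every integral in \eqref{eq:add_kin_form_ma} reduces to a constant times $\alpha(0)$. What you recover is exactly the classical formula \eqref{eq:additive_kinematic_formula}, and all dependence on $\alpha$ away from the origin --- hence on $\beta$ away from $0$ --- is lost. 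Your schematic transfer lemma, which would have to send a Dirac mass at $o\in\Rn$ to a genuinely spread-out weighted integral over $\sn$, therefore cannot hold as stated.

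The actual transfer (Lemma~\ref{le:mixed_ma_s}) operates one dimension lower and through a different body--function correspondence: it relates $\MAp(\lfloor K_1\rfloor,\ldots,\lfloor K_{n-1}\rfloor;\cdot)$ on $\R^{n-1}$ to $S(K_1,\ldots,K_{n-1},\cdot)$ restricted to the lower half-sphere $\s^{n-1}_{-}$, via the gnomonic projection. Accordingly the paper applies Theorem~\ref{thm:add_kin_form_map} in the ambient space $\R^{n-1}$ to $u=\lfloor K\rfloor$ and $v=\lfloor L\rfloor\in\fconvsH$; the reference entries $\ind_{B^{n-1}}=\lfloor B_H^{n-1}\rfloor$ become the $B_H^{n-1}$ entries on the sphere side, and the rotation group appearing is $\SO(n-1)$ from the outset, so no disintegration of $\SO(n)$ is needed. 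The $\oO(1)$ factor is obtained not from Haar-measure disintegration but by pairing the lower-half-sphere identity with its reflected counterpart for $\bar K,\bar L$ (and for the mixed pair $K,\bar L$) so as to cover $\s^{n-1}_{+}$; summing the four half-sphere combinations reconstitutes the double integral over $\sn\times\sn$ and produces the factor $\tfrac{1}{2}$.
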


A rotational integral formula for mixed area measures which is equivalent to Theorem~\ref{thm:formula_mx_area_meas} is provided in Corollary~\ref{corformula_mx_area_meas}. 

\section{Preliminaries}
\label{se:preliminaries}

Throughout this section, we state some results on convex functions. For general references, we refer to\cite{Rockafellar,RockafellarWets,Schneider_CB}.

For $v\in\fconvf$ we write $\partial v(x)$ for the \emph{subdifferential} of $v$ at $x\in\Rn$, which is the set
$$\partial v(x)= \{y\in\Rn : v(z)\geq v(x)+\langle y,z-x\rangle \; \forall z\in\Rn\}.$$
The function $v$ is differentiable at $x$ if and only if $\partial v(x)$ contains only one element, namely the gradient $\nabla v(x)$.

The \emph{Monge--Amp\`ere measure} of $v$, which is a Radon measure on $\Rn$, is defined as
$$\MA(v;B)= V_n\left(\bigcup_{b\in B}\partial v(b)\right)$$
for Borel sets $B\subseteq \Rn$ (see, for example, \cite[Theorem 2.3]{Figalli_MA}). The \emph{mixed Monge--Amp\`ere measure}, which is associated to an $n$-tuple of elements of $\fconvf$, is now given by the relation
\begin{equation}
\label{eq:def_mixed_ma}
\MA(\lambda_1 v_1+\cdots + \lambda_m v_m;\cdot) = \sum_{i_1,\ldots,i_n=1}^m \lambda_{i_1}\cdots \lambda_{i_n} \MA(v_{i_1},\ldots,v_{i_n};\cdot),
\end{equation}
where $m\in\N$, $v_1,\ldots,v_m\in\fconvf$, and $\lambda_1,\ldots,\lambda_m\geq 0$. Equation \eqref{eq:def_mixed_ma} uniquely determines the mixed Monge--Amp\`ere measure if we additionally assume that it is symmetric in its entries. See \cite{Trudinger_Wang_Hessian_III} and \cite[Theorem 4.3]{Colesanti-Ludwig-Mussnig-7}.

\medskip

For a convex function $w\colon \Rn\to(-\infty,\infty]$, we consider its \emph{convex conjugate} or \emph{Legendre--Fenchel transform}
$$w^*(x)=\sup\left\{ \langle x,y\rangle - w(y):  y\in\Rn \right\}$$
for $x\in\Rn$. For each $v\in\fconvf$, the convex conjugate $v^*$ is a lower semicontinuous, convex function on $\Rn$ with values in $(-\infty,\infty]$, which satisfies $v(\bar{x})<\infty$ for at least one $\bar{x}\in\Rn$ and which is \emph{super-coercive}, that is,
$$\lim_{|x|\to\infty} \frac{v^*(x)}{|x|}=\infty.$$
We denote the set of all such functions by $\fconvs$ and remark this duality can be stated as $u^*\in\fconvf$ if and only if $u\in\fconvs$.

While the space of convex bodies is naturally embedded into $\fconvf$ by associating with each body $K\in\Kn$ its support function $h_K\in\fconvf$, the canonical representative of $K$ in $\fconvs$ is given by its \emph{convex indicator function}
\begin{equation}
\label{eq:ind_sup_conj}
\ind_K(x)=h_K^*(x)=\begin{cases}0\quad&\text{if } x\in K,\\ \infty\quad&\text{else.} \end{cases}
\end{equation}

We equip the space $\fconvs$ with the topology associated with epi-convergence, where a sequence of convex functions $w_j\colon \Rn\to(-\infty,\infty]$, $j\in\N$, epi-converges to $w\colon \Rn\to(-\infty,\infty]$ if for every $x\in\Rn$,
\begin{itemize}
    \item $w(x)\leq \liminf_{j\to \infty} w_j(x_j)$ for every sequence $x_j\to x$ and
    \item $w(x)=\lim_{j\to\infty} w_j(x_j)$ for some sequence $x_j\to x$.
\end{itemize}
By \cite[Theorem 11.34]{RockafellarWets}, convex conjugation is a homeomorphism between $\fconvf$ and $\fconvs$. Let us remark that while on $\fconvf$ epi-convergence coincides with pointwise convergence, this is not the case anymore on $\fconvs$. For an alternative description of epi-convergence on $\fconvs$ which uses Hausdorff convergence of level sets, we refer to \cite[Lemma 5]{Colesanti-Ludwig-Mussnig-1}.

We need the following result, which is a consequence of \cite[Lemma 3.3]{Colesanti-Ludwig-Mussnig-6}.
\begin{lemma}
\label{le:joint_cont}
The map
$$(\vartheta,u)\mapsto u\circ \vartheta$$
is jointly continuous on $\SO(n)\times \fconvs$.
\end{lemma}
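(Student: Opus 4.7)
The plan is to verify directly the two defining conditions of epi-convergence for the sequence $u_k \circ \vartheta_k$ towards $u \circ \vartheta$, assuming $\vartheta_k \to \vartheta$ in $\SO(n)$ and $u_k \to u$ in $\fconvs$. The key structural fact making the argument work is that the action $(\vartheta,x) \mapsto \vartheta x$ is jointly continuous on $\SO(n) \times \R^n$, which lets us synchronize the convergence of the rotations with convergence of points in the epigraph-approximating sequences. Before starting, I would check that $u \circ \vartheta$ is again in $\fconvs$: since $\vartheta$ is a linear isometry, convexity, lower semicontinuity, properness, and super-coercivity are all preserved under precomposition.

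For the liminf inequality, I would take an arbitrary $x \in \R^n$ together with any sequence $x_k \to x$ and observe, via the triangle inequality $|\vartheta_k x_k - \vartheta x| \leq |x_k - x| + |(\vartheta_k - \vartheta)x|$, that $\vartheta_k x_k \to \vartheta x$. Applying the first epi-convergence condition for $u_k \to u$ at the point $\vartheta x$ to the sequence $\vartheta_k x_k$ yields
\[
(u \circ \vartheta)(x) = u(\vartheta x) \leq \liminf_{k\to\infty} u_k(\vartheta_k x_k) = \liminf_{k\to\infty} (u_k \circ \vartheta_k)(x_k),
\]
which is exactly the required inequality.

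For the recovery sequence, I would start from a point $x \in \R^n$ and use the second epi-convergence condition for $u_k \to u$ at $\vartheta x$ to produce a sequence $y_k \to \vartheta x$ with $u_k(y_k) \to u(\vartheta x)$ (with the convention that the limit can be $+\infty$). Setting $x_k := \vartheta_k^{-1} y_k$, the continuity of inversion in $\SO(n)$ and the same triangle-inequality argument as above give $x_k \to \vartheta^{-1}(\vartheta x) = x$, while $(u_k \circ \vartheta_k)(x_k) = u_k(y_k) \to u(\vartheta x) = (u \circ \vartheta)(x)$, as required.

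The step I expect to require the most care is, rather than a genuine obstacle, keeping in mind the warning recorded in the paper that epi-convergence on $\fconvs$ does not reduce to pointwise convergence: both conditions must be verified separately, and the possibility that $u$ (and $u_k$) take the value $+\infty$ means I must allow infinite limits in the recovery step without dropping to a pointwise statement. Everything else reduces to the joint continuity of the group action on $\R^n$ and the standard characterization of epi-limits.
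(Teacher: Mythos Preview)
Your proof is correct. The paper itself does not give a proof of this lemma but simply records it as a consequence of \cite[Lemma 3.3]{Colesanti-Ludwig-Mussnig-6}, so there is no detailed argument in the paper to compare against. Your direct verification of the two epi-convergence conditions, exploiting only that $(\vartheta,x)\mapsto \vartheta x$ and $\vartheta\mapsto\vartheta^{-1}$ are continuous and that rotations are isometries, is the natural elementary route and is self-contained. One point you leave implicit is that sequential continuity suffices: this is justified because $\SO(n)$ is metrizable and the epi-topology on $\fconvs$ is metrizable as well (see, e.g., the references in \cite{Colesanti-Ludwig-Mussnig-1, RockafellarWets}), so the product is first countable.
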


For a convex function $w\colon \Rn\to(-\infty,\infty]$ we write
$$\epi(w)=\{(x,t)\in\Rn\times\R : w(x)\leq t\}$$
for its \emph{epi-graph}, which is a convex subset of $\Rn\times \R$. For the convex conjugate of the pointwise sum of two functions $v_1$ and $v_2$ in $\fconvf$ we have
\begin{equation}
\label{eq:epi_sum}
\epi (v_1+v_2)^*=\epi(v_1^*) + \epi(v_2^*).
\end{equation}
The corresponding operation on $\fconvs$ is \emph{infimal convolution} or \emph{epi-sum} which for $u_1,u_2\in\fconvs$ is given by
$$(u_1\infconv u_2) (x)=\inf\left\{u_1(x-y)+u_2(y):y\in\Rn \right\}$$
for $x\in\Rn$. The set in \eqref{eq:epi_sum} can now be written as $\epi(v_1^*\infconv v_2^*)$. By the preceding exposition, the following result, which can be found in \cite[Theorem 7.46 (a)]{RockafellarWets}, is easy to see.
\begin{lemma}
\label{le:infconv_cont}
Let $u_j,v_j\in\fconvs$ for $j\in\N$. If $u_j$ epi-converges to $u\in\fconvs$ and $v_j$ epi-converges to $v\in\fconvs$, then $u_j\infconv v_j$ epi-converges to $u\infconv v$.
\end{lemma}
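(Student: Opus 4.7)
The plan is to deduce Theorem~\ref{thm:formula_mx_area_meas} from Theorem~\ref{thm:add_kin_form_ma} by substituting convex functions $v_K, v_L \in \fconvf$ engineered so that their mixed Monge--Amp\`ere measures against $h_{B^n}$ translate into the weighted mixed area measures with auxiliary body $B_H^{n-1}$ appearing in \eqref{eq:thm_kin_bod}. A natural candidate is a cylindrical perturbation of a support function, for instance $v_K(x) = h_K(x) + \psi(x_n)$ for a suitable convex $\psi\colon \R \to \R$ (chosen in terms of $\beta$), and similarly for $v_L$. The term $\psi(x_n)$ breaks the $1$-homogeneity of $h_K$ in the $e_n$-direction and introduces a nontrivial density in the Hessian whose integration along the $x_n$-axis will produce the $|z_n|$ weighting in \eqref{eq:thm_kin_bod}.

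The pivotal technical step is a bridge identity of the form
\begin{equation*}
\int_{\Rn}\alpha(|y|)\,\d\MA(v_K[j], h_{B^n}[n-j]; y) \;=\; c_{n,j}\int_{\sn} |z_n|\,\beta(|z_n|)\,\d S(K[j], B_H^{n-1}[n-1-j]; z),
\end{equation*}
and the analogue for $v_L$, with explicit constant $c_{n,j}$ and a radial function $\alpha$ obtained from $\beta$. This should follow by polarization of the mixed Monge--Amp\`ere measures, using that the Hessian of $h_K$ is radially degenerate (by $1$-homogeneity), so only the $H$-tangential components of $h_K$ contribute and reproduce the densities of $S(K[j], B_H^{n-1}[n-1-j], \cdot)$. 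The factor $|z_n|$ appears as a Jacobian of the polar/spherical change of variables along the $e_n$-axis, while $\psi''(x_n)$ carries the $\beta$-related weight; much of the groundwork for this type of separation is in \cite{Hug_Mussnig_Ulivelli_supports}.

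Substituting $v = v_K$ and $w = v_L$ in Theorem~\ref{thm:add_kin_form_ma}, the left-hand side involves $v_K + v_L \circ \vartheta^{-1} = h_{K+\vartheta L} + \psi(x_n) + \psi(\langle x, \vartheta e_n\rangle)$, which is a cylindrical perturbation twisted about the axis $\vartheta e_n$. The bridge identity, applied in the twisted frame, yields a spherical integral of the mixed area measure of $K + \vartheta L$ against $B_H^{n-1}$, now expressed relative to the axis $\vartheta e_n$ and fibred over $\vartheta \in \SO(n)$. Using the standard fibration $\SO(n) \to \sn$, $\vartheta \mapsto \vartheta e_n$, together with the $\SO(n-1)$-invariance of the construction, the $\SO(n)$ integration collapses to the $\SO(n-1) \times \oO(1)$ integration of \eqref{eq:thm_kin_bod}; the prefactor $\frac{1}{2\kappa_{n-1}}$ then comes from the normalization of Haar measures and the $\oO(1)$ factor symmetrizing the $\pm e_n$ contributions. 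On the right-hand side of \eqref{eq:add_kin_form_ma}, applying the bridge identity separately in each variable converts the iterated Monge--Amp\`ere integration on $\Rn \times \Rn$ into the iterated spherical integration on $\sn \times \sn$, and the weight $\alpha(\max\{|x|,|y|\})$ turns into $\beta(\min\{|w_n|,|z_n|\})$ via an inversion $r \mapsto 1/r$ built into the correspondence $\alpha \leftrightarrow \beta$, which swaps max and min.

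The main obstacle will be the bridge identity itself: the clean separation of the Hessian of the cylindrical perturbation $v_K$ into tangential and $e_n$-directional components, and the identification of the tangential contribution with the mixed area measure $S(K[j], B_H^{n-1}[n-1-j], \cdot)$ weighted by the correct power of $|z_n|$. Once this identity is established, the remainder of the argument is essentially bookkeeping: multinomial coefficients from the binomial expansion of $S((K+\eta L)[j], B_H^{n-1}[n-1-j], \cdot)$, the disintegration of the Haar measure on $\SO(n)$, and the identification of constants leading to the prefactor $\frac{1}{2\kappa_{n-1}}$.
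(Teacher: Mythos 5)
Your proposal does not address the statement at hand. The statement to be proved is Lemma~\ref{le:infconv_cont}: if $u_j\to u$ and $v_j\to v$ in the sense of epi-convergence on $\fconvs$, then $u_j\infconv v_j$ epi-converges to $u\infconv v$. What you have written instead is a strategy for deducing Theorem~\ref{thm:formula_mx_area_meas} from Theorem~\ref{thm:add_kin_form_ma} via a ``bridge identity'' between mixed Monge--Amp\`ere measures and mixed area measures. Nothing in your text mentions infimal convolution's behaviour under epi-convergence, nor the functions $u_j, v_j$ of the lemma, so as a proof of Lemma~\ref{le:infconv_cont} it is vacuous: this is a complete gap, not a partial one.

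For comparison, the paper disposes of this lemma with essentially no work: it cites \cite[Theorem 7.46 (a)]{RockafellarWets}, and indicates why the statement is ``easy to see'' from the preceding exposition. The short argument runs as follows. Convex conjugation is a homeomorphism between $\fconvf$ (where epi-convergence coincides with pointwise convergence) and $\fconvs$ by \cite[Theorem 11.34]{RockafellarWets}, and by \eqref{eq:epi_sum} the operation $\infconv$ on $\fconvs$ corresponds under conjugation to pointwise addition on $\fconvf$. Since $u_j^*\to u^*$ and $v_j^*\to v^*$ pointwise (both limits being finite convex functions), $u_j^*+v_j^*\to u^*+v^*$ pointwise, hence epi-converges in $\fconvf$; conjugating back gives $u_j\infconv v_j=(u_j^*+v_j^*)^*\to (u^*+v^*)^*=u\infconv v$ in $\fconvs$. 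If you intend to prove this lemma, you need an argument of this type (or a direct verification of the two defining conditions of epi-convergence for $u_j\infconv v_j$); the material on mixed area measures and rotational integration you propose has no bearing on it.
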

Next, for the convex conjugate of the pointwise multiplication of $v\in\fconvf$ with $\lambda> 0$ we have
$$\epi(\lambda v)^* = \lambda \epi(v^*) = \epi(\lambda \sq v^*),$$
where $(\lambda \sq u)(x)=\lambda u(\frac x\lambda)$ denotes the \emph{epi-multiplication} of $u\in\fconvs$ with $\lambda>0$. This operation continuously extends to $\lambda=0$ with $0\sq u = \ind_{\{o\}}$.

\medskip

The \emph{conjugate Monge--Amp\`ere measure} of $u\in\fconvs$ is defined by
$$\MA^*(u;\cdot)=\MA(u^*;\cdot)$$
or equivalently
$$\int_{\Rn} \beta(y) \d\MA^*(u;y)=\int_{\dom(u)} \beta(\nabla u(x))\d x$$
for measurable $\beta\colon\Rn\to[0,\infty)$. Here,
$$\dom(u)=\{x\in\Rn : u(x)<\infty\}$$
is the \emph{domain} of $u$ and it follows from Rademacher's theorem that a convex function is differentiable almost everywhere (w.r.t.~the Lebesgue measure) on its domain. Similarly, for $u_1,\ldots,u_n\in\fconvs$ the \emph{conjugate mixed Monge--Amp\`ere measure} is given by
\begin{equation}
\label{eq:def_mixed_map}
\MAp(u_1,\ldots,u_n;\cdot)=\MA(u_1^*,\ldots,u_n^*;\cdot)
\end{equation}
and satisfies
\begin{equation}
\label{eq:cma_poly_exp}
\MAp\big((\lambda_1\sq u_1)\infconv \cdots \infconv (\lambda_m\sq u_m);\cdot\big)=\sum_{i_1,\ldots,i_n=1}^m \lambda_{i_1}\cdots \lambda_{i_n} \MAp(u_{i_1},\ldots,u_{i_n};\cdot)
\end{equation}
for $m\in\N$, $u_1,\ldots,u_m\in\fconvs$, and $\lambda_1,\ldots,\lambda_m\geq 0$.

We use \eqref{eq:ind_sup_conj} and \eqref{eq:def_mixed_map} to obtain the following equivalent formulation of \cite[Lemma 4.6]{Hug_Mussnig_Ulivelli_supports}. Here we write $V\colon (\Kn)^n\to\R$ for the \emph{mixed volume}, which is defined as the unique symmetric map such that
$$V_n(\lambda_1 K_1 + \cdots + \lambda_m K_m) = \sum_{i_1,\ldots,i_n=1}^m \lambda_{i_1} \cdots \lambda_{i_n} V(K_{i_1},\ldots,K_{i_n})$$
for $m\in\N$, $K_1,\ldots,K_m\in\Kn$, and $\lambda_1,\ldots,\lambda_m\geq 0$. See \cite[Theorem 5.1.7]{Schneider_CB} for further details on mixed volumes. 

\begin{lemma}
\label{le:MAp_bodies}
If $K_1,\ldots,K_n\in\Kn$, then
$$\MAp(\ind_{K_1},\ldots,\ind_{K_n};B)= V(K_1,\ldots,K_n) \delta_o(B)$$
for Borel sets $B\subseteq \Rn$, where $\delta_o$ is the Dirac measure at the origin. In particular,
$$\binom{n}{j} \MAp(\ind_K[j],\ind_{B^n}[n-j];B)=\kappa_{n-j} V_j(K) \delta_o(B)$$ 
for $0\leq j\leq n$.
\end{lemma}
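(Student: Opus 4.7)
The plan is to reduce to a computation for a single support function via conjugacy, and then promote to the mixed case by the polynomial identity \eqref{eq:def_mixed_ma}. By \eqref{eq:ind_sup_conj} we have $\ind_{K_i}^* = h_{K_i}$ for each $i$, so by \eqref{eq:def_mixed_map} the claim reduces to
$$\MA(h_{K_1},\ldots,h_{K_n};B)=V(K_1,\ldots,K_n)\,\delta_o(B)$$
for all Borel $B\subseteq\Rn$.

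To handle the single-function case $\MA(h_K;\cdot)$, I would compute subdifferentials. For $x\in\Rn$, $y\in\partial h_K(x)$ iff $h_K(z)\ge h_K(x)+\langle y,z-x\rangle$ for all $z$; setting $x=o$ gives $\partial h_K(o)=K$. For $x\ne o$, $\partial h_K(x)$ is the exposed face $\{y\in K:\langle y,x\rangle=h_K(x)\}$, which lies in the hyperplane $\{y:\langle y,x\rangle=h_K(x)\}$ and therefore has $V_n$-measure zero. Consequently, for any Borel set $B\subseteq\Rn$,
$$\MA(h_K;B)=V_n\!\left(\bigcup_{b\in B}\partial h_K(b)\right)=V_n(K)\,\delta_o(B),$$
since the union contains $K$ exactly when $o\in B$, and the remaining pieces contribute nothing to the $n$-dimensional Lebesgue measure (here one uses that a countable/Borel union of such exposed faces is still Lebesgue negligible, which follows because the union is contained in $\partial K$ if $o\notin B$).

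Now I would promote this to the mixed setting. By the linearity of support functions under Minkowski sums, $\sum_i \lambda_i h_{K_i}=h_{\sum_i \lambda_i K_i}$ for $\lambda_i\ge 0$, and hence the previous step yields
$$\MA\!\left(\sum_{i=1}^m \lambda_i h_{K_i};B\right)=V_n\!\left(\sum_{i=1}^m \lambda_i K_i\right)\delta_o(B).$$
The left side expands as the polynomial $\sum_{i_1,\ldots,i_n}\lambda_{i_1}\cdots\lambda_{i_n}\MA(h_{K_{i_1}},\ldots,h_{K_{i_n}};B)$ in view of \eqref{eq:def_mixed_ma}, while the right side expands as $\sum_{i_1,\ldots,i_n}\lambda_{i_1}\cdots\lambda_{i_n}V(K_{i_1},\ldots,K_{i_n})\delta_o(B)$ by the very definition of mixed volumes. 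Since both mixed quantities are symmetric in their arguments, comparison of coefficients yields the first claim of the lemma.

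For the specialization to $K_1=\cdots=K_j=K$ and $K_{j+1}=\cdots=K_n=B^n$, I would invoke the standard identity $\binom{n}{j}V(K[j],B^n[n-j])=\kappa_{n-j}V_j(K)$, which is obtained by matching the Steiner formula $V_n(K+rB^n)=\sum_{j=0}^n r^{n-j}\kappa_{n-j}V_j(K)$ with the polynomial expansion $V_n(K+rB^n)=\sum_{j=0}^n\binom{n}{j}r^{n-j}V(K[j],B^n[n-j])$. Neither step is a serious obstacle; the only point requiring mild care is verifying that faces $\partial h_K(b)$ for $b\neq o$ have Lebesgue measure zero, so that the Monge--Amp\`ere measure of a support function concentrates at the origin.
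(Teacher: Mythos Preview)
Your proof is correct. The paper, by contrast, does not actually prove this lemma: it merely records it as the conjugate reformulation (via \eqref{eq:ind_sup_conj} and \eqref{eq:def_mixed_map}) of \cite[Lemma~4.6]{Hug_Mussnig_Ulivelli_supports}. Your argument is genuinely more self-contained: you compute $\MA(h_K;\cdot)$ directly from the subdifferential definition, observing that $\partial h_K(o)=K$ while $\bigcup_{b\neq o}\partial h_K(b)\subseteq\partial K$ is Lebesgue-null, and then polarize via \eqref{eq:def_mixed_ma} and the defining expansion of mixed volumes to obtain the mixed statement. This buys independence from the external reference at essentially no cost; the only point requiring care---that the union of support sets over $b\neq o$ is negligible even though the union may be uncountable---you handle correctly by noting it is contained in the single null set $\partial K$ rather than by appealing to countable subadditivity.
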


The next result is a Kubota-type formula for conjugate mixed Monge--Amp\`ere measures and was established by the authors in \cite[Theorem 5.1]{Hug_Mussnig_Ulivelli_supports}. Here, for $1\leq k\leq n$ we denote by $\Grass{k}{n}$ the Grassmannian of $k$-dimensional linear subspaces of $\Rn$ and integration on this space is always understood with respect to the Haar probability measure. For $u\in\fconvs$ and $E\in\Grass{k}{n}$ we write
$$\proj_E u(x_E) = \min\nolimits_{y\in E^\perp} u(x_E +y)$$
with $x_E\in E$, for the \emph{projection function} of $u$.

\begin{lemma}
\label{le:ck_map}
If $1\leq k <n$ and $\varphi\colon \Rn\to [0,\infty)$ is measurable, then
\begin{multline*}
\frac{1}{\kappa_n} \int_{\Rn} \varphi(y) \d\MAp(u_1,\ldots,u_k,\ind_{B^n}[n-k];y)\\
=\frac{1}{\kappa_k} \int_{\Grass{k}{n}}\int_E \varphi(y_E) \d\MAp_E(\proj_E u_1,\ldots,\proj_E u_k;y_E) \d E
\end{multline*}
for $u_1,\ldots,u_k\in\fconvs$.
\end{lemma}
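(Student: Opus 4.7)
The plan is to reduce the identity to the classical Kubota-type formula for mixed volumes of convex bodies, of which this lemma is a functional lift. The first step is polarization: both sides of the identity are symmetric in $u_1,\ldots,u_k$, and by \eqref{eq:cma_poly_exp} (applied in $\R^n$ and, via its analog in each $E\in\Grass{k}{n}$, using that projection commutes with $\infconv$ and $\sq$ as one verifies directly from $\proj_E u(x_E)=\min_{y\in E^\perp} u(x_E+y)$), both sides are Minkowski-multilinear in $u_1,\ldots,u_k$. Consequently, it suffices to establish the diagonal case $u_1=\cdots=u_k=u$ for arbitrary $u\in\fconvs$: applying this diagonal identity to $u=(\lambda_1\sq u_1)\infconv\cdots\infconv(\lambda_k\sq u_k)$ and extracting the coefficient of $\lambda_1\cdots\lambda_k$ then recovers the general statement.

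For the diagonal case, first take $u=\ind_K$ with $K\in\Kn$. Lemma~\ref{le:MAp_bodies} gives $\binom{n}{k}\MAp(\ind_K[k],\ind_{B^n}[n-k];\cdot)=\kappa_{n-k}V_k(K)\delta_o$, and since $\proj_E\ind_K=\ind_{K|E}$, the $E$-analog yields $\MAp_E(\proj_E\ind_K[k];\cdot)=V_k(K|E)\delta_o$. The diagonal identity then collapses to
$$\frac{\kappa_{n-k}V_k(K)}{\binom{n}{k}\kappa_n}=\frac{1}{\kappa_k}\int_{\Grass{k}{n}} V_k(K|E)\d E,$$
which is the classical Kubota formula. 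To extend to arbitrary $u\in\fconvs$, one uses that, for fixed bounded measurable $\varphi$, both sides of the diagonal identity are continuous in $u$ with respect to epi-convergence: the left-hand side by weak continuity of the mixed Monge--Amp\`ere measure in its arguments, and the right-hand side by continuity of $u\mapsto\proj_E u$ under epi-convergence combined with a dominated-convergence argument on the compact space $\Grass{k}{n}$. Thus it suffices to verify the formula on an epi-dense subset $\mathcal D\subset\fconvs$.

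The main technical obstacle lies in choosing $\mathcal D$, since the indicator functions $\{\ind_K:K\in\Kn\}$ alone are not epi-dense. A natural path is to approximate a given $u\in\fconvs$ by $C^2$-smooth elements of $\fconvs$ obtained by Moreau--Yosida regularization of $u^*\in\fconvf$ followed by convex conjugation. For such smooth $u$ both $\MAp$ and $\MAp_E(\proj_E\,\cdot\,)$ are given by explicit Hessian-density formulas (mixed discriminants), and the diagonal Kubota identity then follows from a Fubini/coarea computation anchored in the classical projection identity for volumes. Arranging the regularization so that both sides of the identity converge simultaneously, and passing from bounded to general measurable nonnegative $\varphi$ by monotone convergence, is the principal technical labor of the proof.
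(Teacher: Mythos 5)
Note first that the paper does not prove Lemma~\ref{le:ck_map} at all: it is quoted from \cite[Theorem 5.1]{Hug_Mussnig_Ulivelli_supports}, so your attempt has to stand on its own. Its scaffolding is sound: symmetry and multilinearity of both sides (via \eqref{eq:cma_poly_exp} and the fact that $\proj_E$ commutes with $\infconv$ and $\sq$, since $(\proj_E u)^*=u^*|_E$) do reduce the statement to the diagonal case, and your verification for $u=\ind_K$ correctly collapses to the classical Kubota formula. The genuine gap is that the diagonal identity is never established on any epi-dense class. Indicator functions are not dense, as you note, and for the smooth class you propose the identity \emph{is} the nontrivial content of the lemma: you would need (i) an explicit description of $\MAp(u[k],\ind_{B^n}[n-k];\cdot)=\MA(u^*[k],h_{B^n}[n-k];\cdot)$ for smooth $u^*$ — this is not the ``all entries $C^2$'' mixed-discriminant density, because $h_{B^n}=|\cdot|$ is not $C^2$ at the origin and has a rank-$(n-1)$ Hessian $\tfrac1{|x|}(I-\hat x\hat x^{T})$ elsewhere — and (ii), after the polar/flag coarea step, a pointwise mixed-discriminant identity of the form: the average of $\det\bigl((P_EAP_E)|_E\bigr)$ over $k$-planes $E$ containing a fixed unit vector $e$ equals a constant multiple of $\det\bigl(A[k],(I-ee^{T})[n-k]\bigr)$. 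Neither of these follows from ``the classical projection identity for volumes''; together they are the heart of the proof, and your proposal leaves them as an assertion labelled ``the principal technical labor.''

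A second, smaller but real, error is the continuity claim: for a \emph{fixed bounded measurable} $\varphi$, neither side is continuous along epi-convergent sequences. Weak continuity of the (conjugate) mixed Monge--Amp\`ere measures only yields convergence of integrals of continuous test functions with compact support; mass can concentrate on atoms or lower-dimensional sets under smoothing (take $\varphi=\mathbbm{1}_{\{o\}}$ and smooth approximations of $\ind_K$, for which the left side jumps from $0$ to a positive multiple of $V_k(K)$ in the limit). The correct route is to prove the identity for $\varphi\in C_c(\Rn)$, conclude that the two Radon measures coincide — the right side being the measure $B\mapsto \frac1{\kappa_k}\int_{\Grass{k}{n}}\MAp_E(\proj_E u_1,\ldots,\proj_E u_k;B\cap E)\d E$ — and only then pass to arbitrary nonnegative measurable $\varphi$. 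That repair is routine; the missing diagonal/smooth computation in the first paragraph is not, so as it stands the proposal does not constitute a proof.
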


For $t\geq 0$, let $u_t\in \fconvs$ be defined by $u_t(x)=t\vert x\vert + \ind_{B^n}(x)$ for $x\in\Rn$. We need the following result, which is a consequence of \cite[Lemma 8.4]{Colesanti-Ludwig-Mussnig-7} together with the defining relation \eqref{eq:def_mixed_map}.

\begin{lemma}
\label{le:int_map_ut}
If $1\leq j \leq n$ and $\alpha\in C_c([0,\infty))$, then
$$\int_{\Rn}\alpha(|y|) \d\MAp(u_t[j],\ind_{B^n}[n-j];y) = \kappa_n \alpha(t)$$
for $t\geq 0$.
\end{lemma}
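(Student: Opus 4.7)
The plan is to pass through the duality relation \eqref{eq:def_mixed_map} and reduce the claim to a known evaluation of a mixed Monge--Amp\`ere integral from \cite{Colesanti-Ludwig-Mussnig-7}.

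First I would identify the convex conjugate $u_t^*$. A direct computation from $u_t(x) = t|x| + \ind_{B^n}(x)$ gives
\begin{equation*}
u_t^*(y) = \sup_{x \in B^n}\{\langle x,y\rangle - t|x|\} = (|y|-t)_+,
\end{equation*}
since the supremum is attained at $x = 0$ when $|y| \le t$ and at $x = y/|y|$ when $|y| > t$. In particular $u_t^* \in \fconvf$. Combined with $\ind_{B^n}^* = h_{B^n}$ from \eqref{eq:ind_sup_conj}, the definition \eqref{eq:def_mixed_map} yields
\begin{equation*}
\MAp(u_t[j],\ind_{B^n}[n-j];\cdot) = \MA((|\cdot|-t)_+[j],h_{B^n}[n-j];\cdot),
\end{equation*}
so the integral on the left-hand side of the claim is transformed into a classical (non-conjugate) mixed Monge--Amp\`ere integral of a radial test function.

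With this identification in place, the integral in question is precisely the one treated in \cite[Lemma 8.4]{Colesanti-Ludwig-Mussnig-7}, which evaluates it to $\kappa_n \alpha(t)$; plugging this in gives the conclusion. The only computation I carry out is the identification of $u_t^*$, which is immediate. The geometric substance sits in the cited lemma: since $(|\cdot|-t)_+$ vanishes on $tB^n$ and grows radially outside, its ``crease'' along $\{|y|=t\}$ is the only place the relevant subdifferentials have positive-volume image, so the mixed Monge--Amp\`ere measure concentrates on that sphere and a radial test function is effectively evaluated at $t$, with total mass $\kappa_n$.

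The main potential obstacle would be a mismatch in normalization between \eqref{eq:def_mixed_map} and the version of Lemma 8.4 one is citing; if the cited formula were stated for a rescaled function, a simple change of variables would absorb the factor. In the sanity-check case $j = n$, one can verify the formula directly by computing $\partial (|\cdot|-t)_+$ explicitly and observing that the image of a radial annulus around the sphere $\{|y|=t\}$ fills out $B^n$ with Lebesgue measure $\kappa_n$, thereby confirming the coefficient.
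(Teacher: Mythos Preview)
Your proposal is correct and follows essentially the same route as the paper, which simply attributes the lemma to \cite[Lemma 8.4]{Colesanti-Ludwig-Mussnig-7} together with the defining relation \eqref{eq:def_mixed_map}. Your explicit identification of $u_t^* = (|\cdot|-t)_+$ and the heuristic discussion of where the mixed Monge--Amp\`ere mass lives merely fill in what the paper leaves implicit.
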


\medskip

Lastly, we need some results on valuations on $\fconvs$, which are defined analogously to \eqref{eq:def_val_func}. By \cite[Proposition 3.5]{Colesanti-Ludwig-Mussnig-4}, a map $\oZ\colon \fconvs \to \R$ is a valuation if and only if $v\mapsto \oZ^*(v)=\oZ(v^*)$ is a valuation on $\fconvf$. We say that $\oZ$ is \emph{epi-translation invariant} if $\oZ^*$ is dually epi-translation invariant or equivalently if $\oZ(u\circ \tau^{-1}+c)=\oZ(u)$ for $u\in\fconvs$, translations $\tau$ on $\Rn$, and $c\in\R$. The operator $\oZ$ is \emph{epi-homogeneous} of degree $j\in\N$ if $\oZ(\lambda \sq u)=\lambda^j \oZ(u)$ for $u\in\fconvs$ and $\lambda\geq 0$.

The following result from \cite[Proposition 5.3]{Colesanti-Ludwig-Mussnig-7} provides some examples of valuations on $\fconvs$. We denote by $C_c(\Rn)$ the set of continuous real-valued functions with compact support on $\Rn$.

\begin{lemma}
	\label{le:mixed_val}
	Let $\varphi\in C_c(\Rn)$ and $0\leq j\leq n$. If $u_1,\ldots,u_{n-1}\in\fconvs$, then
	$$u\mapsto \int_{\Rn} \varphi(x) \d\MAp(u_1,\ldots,u_{n-j},u[j];x)$$
	defines a continuous, epi-translation invariant valuation on $\fconvs$ that is epi-homogeneous of degree $j$.
\end{lemma}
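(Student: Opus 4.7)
The plan is to verify the four properties — valuation, continuity, epi-translation invariance, and epi-homogeneity of degree $j$ — separately. It is convenient to pass to conjugates: by \cite[Proposition 3.5]{Colesanti-Ludwig-Mussnig-4} together with \eqref{eq:def_mixed_map}, setting $v_i:=u_i^*$, it suffices to show that
$$\Phi(v):=\int_{\Rn}\varphi(x)\,\d\MA(v_1,\ldots,v_{n-j},v[j];x)$$
defines a continuous valuation on $\fconvf$ that is invariant under the addition of affine functions to $v$ and homogeneous of degree $j$ with respect to the pointwise scaling $v\mapsto\lambda v$.

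The crux is the valuation property, which I would reduce to the single-argument case by polarization. For $\lambda\ge 0$ and $\mu=(\mu_1,\ldots,\mu_{n-j})\in[0,\infty)^{n-j}$ let $w_{\lambda,\mu}:=\lambda v+\sum_{i=1}^{n-j}\mu_iv_i\in\fconvf$. By \eqref{eq:def_mixed_ma}, the quantity $P_v(\lambda,\mu):=\int_{\Rn}\varphi\,\d\MA(w_{\lambda,\mu};\cdot)$ is a polynomial in $(\lambda,\mu)$ of total degree $n$, and its coefficient in front of $\lambda^j\mu_1\cdots\mu_{n-j}$ equals $\frac{n!}{j!}\Phi(v)$. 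The single-argument functional $w\mapsto\int\varphi\,\d\MA(w;\cdot)$ is known to be a valuation on $\fconvf$; this can be obtained from the defining formula $\MA(w;B)=V_n(\bigcup_{x\in B}\partial w(x))$ together with the inclusion-exclusion behaviour of subdifferentials of $w_1\wedge w_2$ and $w_1\vee w_2$ (up to Lebesgue null sets), and is part of the Colesanti--Ludwig--Mussnig series. Granted this, for every fixed $(\lambda,\mu)$ and fixed $v_1,\ldots,v_{n-j}$ the map $v\mapsto w_{\lambda,\mu}$ on $\fconvf$ commutes with pointwise minima and maxima, since $\lambda(v\wedge v')+c=(\lambda v+c)\wedge(\lambda v'+c)$ and analogously for $\vee$; hence each evaluation $v\mapsto P_v(\lambda,\mu)$ inherits the valuation property. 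The coefficients of $P_v$ in $(\lambda,\mu)$ can be written as finite $\R$-linear combinations of evaluations $P_v(\lambda^{(1)},\mu^{(1)}),\ldots,P_v(\lambda^{(N)},\mu^{(N)})$ at suitable parameter vectors (Vandermonde-type inversion), and linear combinations of valuations remain valuations; applied to the coefficient of $\lambda^j\mu_1\cdots\mu_{n-j}$, this shows that $\Phi$ is a valuation on $\fconvf$.

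The remaining three properties are comparatively direct. Continuity follows from weak continuity of the mixed Monge--Amp\`ere measure under epi-convergence of its arguments — itself reducible via \eqref{eq:def_mixed_ma} to weak continuity of $\MA$ on $\fconvf$ — combined with the hypothesis $\varphi\in C_c(\Rn)$. Invariance under adding an affine function $\ell$ to $v$ reduces to $\MA(v+\ell;\cdot)=\MA(v;\cdot)$, which is immediate from the defining formula since $\partial(v+\ell)(x)=\partial v(x)+\nabla\ell$ merely translates subdifferentials and Lebesgue measure is translation invariant; polarization then transports this invariance to the mixed measure. Epi-homogeneity of $\oZ$ of degree $j$ follows from $(\lambda\sq u)^*=\lambda u^*$ together with the scaling identity $\MAp(\lambda\sq u[j],u_1,\ldots,u_{n-j};\cdot)=\lambda^j\MAp(u[j],u_1,\ldots,u_{n-j};\cdot)$, which is implicit in \eqref{eq:cma_poly_exp}. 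The nonlinearity of the mixed measure in $u$ when $j>1$ is what makes the polarization step for the valuation property the only substantive difficulty: a direct check along $u\wedge v+u\vee v=u+v$ is blocked by this nonlinearity, whereas the other three properties follow by straightforward translation- and scaling-invariance arguments at the level of duals.
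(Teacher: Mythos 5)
Your proposal is correct, but note that the paper itself contains no proof of Lemma~\ref{le:mixed_val}: it is quoted directly from \cite[Proposition 5.3]{Colesanti-Ludwig-Mussnig-7}, so any comparison is really with that source rather than with an argument in this paper. Your reconstruction --- pass to conjugates via \cite[Proposition 3.5]{Colesanti-Ludwig-Mussnig-4}, then obtain the valuation property of $\Phi$ by polarization from the one-function case, using that the coefficient of $\lambda^j\mu_1\cdots\mu_{n-j}$ in $\int_{\Rn}\varphi\,\d\MA\big(\lambda v+\sum_i\mu_i v_i;\cdot\big)$ equals $\tfrac{n!}{j!}\Phi(v)$ and that coefficient extraction at finitely many nonnegative parameter values is a linear combination of valuations --- is sound and is essentially the same mechanism used in the cited source, where the valuation, continuity and invariance properties of mixed Monge--Amp\`ere measures are likewise reduced to the single-argument measure by multilinear expansion; so what your write-up buys is self-containedness rather than a genuinely new method. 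Your treatment of the remaining properties is also correct: $\MA(v+\ell;\cdot)=\MA(v;\cdot)$ for affine $\ell$ combined with the same polarization gives dual epi-translation invariance, and $(\lambda\sq u)^*=\lambda u^*$ with multilinearity gives epi-homogeneity. Two points are worth making explicit in a final version: continuity of $v\mapsto\int_{\Rn}\varphi\,\d\MA\big(\lambda v+\sum_i\mu_i v_i;\cdot\big)$ uses that pointwise convergence of finite convex functions is locally uniform, so the weak continuity of the Monge--Amp\`ere measure \cite{Figalli_MA} applies, and continuity is then transported to $\fconvs$ through the conjugation homeomorphism \cite[Theorem 11.34]{RockafellarWets}; and in the epi-homogeneity argument the value $\lambda=0$ (where $0\sq u=\ind_{\{o\}}$, whose conjugate is the zero function) should be checked separately for $j\geq 1$, which your scaling identity does cover since the factor $\lambda^j$ vanishes there.
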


We say that a map $\oZ$ on $\fconvs$ is \emph{rotation invariant} if $\oZ(u\circ \vartheta^{-1})=\oZ(u)$ for $u\in\fconvs$ and $\vartheta\in\SO(n)$. The following Hadwiger-type result, provided in \cite[Theorem 1.3]{Colesanti-Ludwig-Mussnig-5}, is equivalent to Theorem~\ref{thm:hadwiger_fconvf} and shows that not many examples of valuations remain under the additional assumption of rotation invariance. For the version stated below, see \cite[Theorem 1.7]{Colesanti-Ludwig-Mussnig-7}.

\begin{theorem}
\label{thm:hadwiger_fconvs}
A functional $\oZ\colon \fconvs\to\R$ is a continuous, epi-translation and rotation invariant valuation if and only if there exist functions $\alpha_0,\ldots,\alpha_n\in C_c({[0,\infty)})$ such that
$$\oZ(u)=\sum_{j=0}^n \int_{\Rn} \alpha_j(|y|)\d\MAp(u[j],\ind_{B^n}[n-j];y)$$
for $u\in\fconvs$.
\end{theorem}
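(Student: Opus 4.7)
The plan is to derive Theorem~\ref{thm:hadwiger_fconvs} directly from Theorem~\ref{thm:hadwiger_fconvf} by transporting hypotheses and conclusions across the Legendre--Fenchel duality between $\fconvs$ and $\fconvf$. Given a candidate $\oZ$ on $\fconvs$, I would define $\oZ^*\colon\fconvf\to\R$ by $\oZ^*(v):=\oZ(v^*)$ and verify that $\oZ^*$ satisfies the hypotheses of Theorem~\ref{thm:hadwiger_fconvf}.

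First I would check the four required properties of $\oZ^*$. Continuity follows because conjugation is a homeomorphism $\fconvf\to\fconvs$ by \cite[Theorem 11.34]{RockafellarWets}. The valuation property transfers via the cited \cite[Proposition 3.5]{Colesanti-Ludwig-Mussnig-4}. Rotation invariance holds since a direct calculation gives $(v\circ\vartheta^{-1})^*=v^*\circ\vartheta^{-1}$. Finally, dual epi-translation invariance uses that for an affine $f(x)=\langle a,x\rangle+b$ one has $(v+f)^*(y)=v^*(y-a)-b$, so the epi-translation invariance of $\oZ$ forces $\oZ^*(v+f)=\oZ^*(v)$.

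Next I would apply Theorem~\ref{thm:hadwiger_fconvf} to obtain $\tilde\alpha_0,\ldots,\tilde\alpha_n\in C_c([0,\infty))$ such that
\begin{equation*}
\oZ(u)=\oZ^*(u^*)=\sum_{j=0}^n\binom{n}{j}\frac{1}{\kappa_{n-j}}\int_{\Rn}\tilde\alpha_j(|x|)\d\MA(u^*[j],h_{B^n}[n-j];x).
\end{equation*}
Since $h_{B^n}=\ind_{B^n}^*$ by \eqref{eq:ind_sup_conj}, the defining relation \eqref{eq:def_mixed_map} of the conjugate mixed Monge--Amp\`ere measure yields $\MA(u^*[j],h_{B^n}[n-j];\cdot)=\MAp(u[j],\ind_{B^n}[n-j];\cdot)$. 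Absorbing the combinatorial constants into $\alpha_j:=\binom{n}{j}\tilde\alpha_j/\kappa_{n-j}\in C_c([0,\infty))$ produces the claimed representation.

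For the converse, each summand $u\mapsto\int_{\Rn}\alpha_j(|y|)\d\MAp(u[j],\ind_{B^n}[n-j];y)$ is a continuous, epi-translation invariant valuation on $\fconvs$ by Lemma~\ref{le:mixed_val}, applied with $\varphi(y)=\alpha_j(|y|)\in C_c(\Rn)$ and $u_1=\cdots=u_{n-j}=\ind_{B^n}$. Rotation invariance is immediate because $\ind_{B^n}$ and the radial weight $\varphi$ are rotation invariant, while the conjugate mixed Monge--Amp\`ere measure is rotation equivariant. I do not expect a substantive obstacle here; the main subtle point is that $(v\wedge w)^*$ need not coincide with $v^*\wedge w^*$, so the preservation of the valuation property under conjugation is not literally pointwise and must be handled through \cite[Proposition 3.5]{Colesanti-Ludwig-Mussnig-4}, which reconciles the two min/max operations at the level of functionals.
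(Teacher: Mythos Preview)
Your proposal is correct. The paper does not supply its own proof of Theorem~\ref{thm:hadwiger_fconvs}; it merely cites the result from \cite[Theorem 1.3]{Colesanti-Ludwig-Mussnig-5} and \cite[Theorem 1.7]{Colesanti-Ludwig-Mussnig-7} and remarks that it is ``equivalent to Theorem~\ref{thm:hadwiger_fconvf}''. Your argument is precisely a spelled-out version of that equivalence: transport across Legendre--Fenchel duality using the homeomorphism property, \cite[Proposition 3.5]{Colesanti-Ludwig-Mussnig-4} for the valuation property, the identity $(v\circ\vartheta^{-1})^*=v^*\circ\vartheta^{-1}$, and the computation $(v+f)^*=v^*(\cdot-a)-b$ for affine $f$, followed by the defining relation~\eqref{eq:def_mixed_map} and~\eqref{eq:ind_sup_conj} to rewrite the representation. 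The converse via Lemma~\ref{le:mixed_val} is likewise fine. There is nothing to compare beyond noting that you have filled in details the paper leaves implicit.
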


Let us remark that Lemma~\ref{le:int_map_ut} shows that the operator $\oZ$ that appears in Theorem~\ref{thm:hadwiger_fconvs} uniquely determines the densities $\alpha_j$.

\section{Proof of Theorem~\ref{thm:add_kin_form_ma}}
Throughout this section, we use the abbreviated notation
$$\MAp_j(u;\cdot) = \MAp(u[j],\ind_{B^n}[n-j];\cdot)=\MA(u^*[j],h_{B^n}[n-j];\cdot)$$
for $0\leq j\leq n$ and $u\in\fconvs$, which was introduced in \cite{Colesanti-Ludwig-Mussnig-7}.

To prove the next result, we follow the strategy of the proof of \cite[Lemma 3.4]{Colesanti-Ludwig-Mussnig-6}.  Moreover, we use that $(u\circ\vartheta)^*=u^*\circ \vartheta$, for $u\in \fconvs$ and $\vartheta\in \SO(n)$, and $\MA(v;\vartheta B)=\MA(v\circ\vartheta;B)$, for $v\in \fconvf$, $\vartheta\in \SO(n)$, and Borel sets $B \subseteq\R^n$.

\begin{lemma}
\label{le:int_son_cont}
For any $1\leq j\leq n$, fixed $\bar{v}\in\fconvs$ and $\alpha\in C_c([0,\infty))$ the map
\begin{equation}
\label{eq:u_mapsto_int}
u\mapsto \int_{\SO(n)} \int_{\Rn} \alpha(|y|) \d\MAp_j\big(u\infconv (\bar{v}\circ\vartheta^{-1});y\big) \d\vartheta
\end{equation}
is a continuous, epi-translation and rotation invariant valuation on $\fconvs$.
\end{lemma}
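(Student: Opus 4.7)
The plan is to reduce, for each fixed $\vartheta\in\SO(n)$, the inner integrand of \eqref{eq:u_mapsto_int} to a finite sum of valuations already covered by Lemma~\ref{le:mixed_val} via the polynomial expansion of the mixed Monge--Amp\`ere measure, then to carry continuity, the valuation property and epi-translation invariance through the $\SO(n)$-integration by a standard dominated-convergence and linearity argument, and finally to deduce rotation invariance from the interplay between infimal convolution and rotations together with Haar invariance of the integration measure.

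As a first step, by the symmetric multilinearity of $\MAp$ in each of its $n$ entries (a specialization of \eqref{eq:cma_poly_exp} with $u_1=u$, $u_2=\bar v\circ\vartheta^{-1}$ and the remaining slots filled by $\ind_{B^n}$),
\begin{equation*}
\MAp_j\big(u\infconv(\bar v\circ\vartheta^{-1});\cdot\big)=\sum_{i=0}^{j}\binom{j}{i}\MAp\big(u[i],(\bar v\circ\vartheta^{-1})[j-i],\ind_{B^n}[n-j];\cdot\big).
\end{equation*}
Since $y\mapsto\alpha(|y|)$ belongs to $C_c(\Rn)$, Lemma~\ref{le:mixed_val} implies that, for each fixed $\vartheta$ and each $i\in\{0,\ldots,j\}$, the functional $u\mapsto\int_{\Rn}\alpha(|y|)\,\d\MAp(u[i],(\bar v\circ\vartheta^{-1})[j-i],\ind_{B^n}[n-j];y)$ is a continuous, epi-translation invariant valuation on $\fconvs$, and these properties pass to the finite sum.

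To extend the three pointwise-in-$\vartheta$ properties across the $\SO(n)$-integral, I would verify joint continuity of the inner integral on $\SO(n)\times\fconvs$. Lemma~\ref{le:joint_cont} gives continuity of $\vartheta\mapsto \bar v\circ\vartheta^{-1}$, Lemma~\ref{le:infconv_cont} transports this to $u\infconv(\bar v\circ\vartheta^{-1})$, and the continuity part of Lemma~\ref{le:mixed_val} then yields joint continuity of the inner integral. Compactness of $\SO(n)$ makes the outer integral well-defined, and for any convergent sequence $u_k\to u$ in $\fconvs$, joint continuity on the compact product $(\{u_k:k\in\N\}\cup\{u\})\times\SO(n)$ supplies a uniform bound in $\vartheta$, so dominated convergence gives continuity of \eqref{eq:u_mapsto_int}. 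The valuation identity and epi-translation invariance, being affine in the functional, descend through the Haar integral by linearity.

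Finally, for rotation invariance, fix $\rho\in\SO(n)$. A change of variable $y'=\rho^{-1}y$ in the defining infimum of infimal convolution produces the identity
$$(u\circ\rho^{-1})\infconv(\bar v\circ\vartheta^{-1})=\big(u\infconv(\bar v\circ(\rho^{-1}\vartheta)^{-1})\big)\circ\rho^{-1}.$$
Combined with $(w\circ\rho^{-1})^*=w^*\circ\rho^{-1}$, the rotation covariance $\MA(v\circ\rho^{-1};B)=\MA(v;\rho B)$ of the (mixed) Monge--Amp\`ere measure, the rotation invariance of $h_{B^n}$, and the rotation invariance of $\alpha(|\cdot|)$, this yields $\int_{\Rn}\alpha(|y|)\,\d\MAp_j((u\circ\rho^{-1})\infconv(\bar v\circ\vartheta^{-1});y)=\int_{\Rn}\alpha(|y|)\,\d\MAp_j(u\infconv(\bar v\circ(\rho^{-1}\vartheta)^{-1});y)$. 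The substitution $\vartheta'=\rho^{-1}\vartheta$ together with Haar invariance of the probability measure on $\SO(n)$ then closes the argument. I expect the rotation-invariance step to be the main technical obstacle, as it requires tracking the passage of $\rho^{-1}$ cleanly through both the infimal convolution and the conjugate mixed Monge--Amp\`ere measure so that it can be absorbed into the $\vartheta$-integration; the other properties reduce rather directly to Lemma~\ref{le:mixed_val} and the continuity lemmas of Section~\ref{se:preliminaries}.
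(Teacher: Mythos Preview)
Your argument is correct and follows essentially the same route as the paper's proof: joint continuity of the inner integral via Lemmas~\ref{le:joint_cont}, \ref{le:infconv_cont}, and \ref{le:mixed_val}, a uniform bound on a compact product to justify dominated convergence, and rotation invariance via the interplay of infimal convolution with rotations together with Haar invariance. The only notable difference is that you first expand $\MAp_j(u\infconv(\bar v\circ\vartheta^{-1});\cdot)$ multilinearly before invoking Lemma~\ref{le:mixed_val}, whereas the paper applies Lemma~\ref{le:mixed_val} directly to $u\infconv(\bar v\circ\vartheta^{-1})$ and appeals to the compatibility of infimal convolution with the valuation property; your expansion is an unnecessary but harmless detour that has the mild advantage of making the valuation property for fixed $\vartheta$ immediate.
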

\begin{proof}
We start by showing that
\begin{equation}
\label{eq:joint_cont_int}
(\vartheta,u)\mapsto \int_{\Rn} \alpha(|y|) \d\MAp_j\big(u\infconv (\bar{v}\circ\vartheta^{-1});y\big)
\end{equation}
is jointly continuous on $\SO(n)\times \fconvs$. For this, observe that it follows from the rotational symmetry of the integrand and the two basic facts mentioned before the statement of the lemma that
\begin{align*}
\int_{\Rn} \alpha(|y|) \d\MAp_j\big(u\infconv (\bar{v}\circ\vartheta^{-1});y\big) &= \int_{\Rn} \alpha(|y|) \d\MAp_j\big((u\infconv (\bar{v}\circ\vartheta^{-1}))\circ \vartheta ;y\big)\\
&= \int_{\Rn} \alpha(|y|) \d\MAp_j\big((u\circ \vartheta)\infconv \bar{v} ;y\big)
\end{align*}
for every $u\in\fconvs$ and $\vartheta\in\SO(n)$. Thus, by Lemma~\ref{le:infconv_cont}, Lemma~\ref{le:mixed_val}, and Lemma~\ref{le:joint_cont}, 
the map given in \eqref{eq:joint_cont_int} is jointly continuous on $\SO(n)\times \fconvs$.

Let $u_i\in\fconvs$, $i\in\N$, be such that $u_i$ epi-converges to some $\bar{u}\in\fconvs$, which means that $\{u_i : i\in\N\} \cup \{\bar{u}\}$ is sequentially compact. Together with the fact that $\SO(n)$ is compact and the map given in \eqref{eq:joint_cont_int} is jointly continuous, it follows that the supremum
\begin{equation}\label{eq:supfinite}
\sup\left\{\left\vert \int_{\Rn} \alpha(|y|) \d\MAp_j\big(u_i\infconv (\bar{v}\circ\vartheta^{-1});y\big) \right \vert: i\in\N, \vartheta \in \SO(n) \right\}
\end{equation}
is finite. Thus, we may apply the dominated convergence theorem to obtain
\begin{multline*}
\lim_{i\to\infty} \int_{\SO(n)} \int_{\Rn} \alpha(|y|) \d\MAp_j\big(u_i\infconv (\bar{v}\circ\vartheta^{-1});y\big) \d\vartheta\\
= \int_{\SO(n)} \int_{\Rn} \alpha(|y|) \d\MAp_j\big(\bar{u}\infconv (\bar{v}\circ\vartheta^{-1});y\big) \d\vartheta,
\end{multline*}
which shows that \eqref{eq:u_mapsto_int} is continuous.

Lastly, it follows from the properties of infimal convolution and the corresponding properties provided in Lemma~\ref{le:mixed_val} that \eqref{eq:u_mapsto_int} defines an epi-translation and rotation invariant valuation.
\end{proof}

\begin{remark}
    An alternative argument showing that the supremum in \eqref{eq:supfinite} is finite, can be based on \cite[Remark 5.2]{Hug_Mussnig_Ulivelli_supports}.
\end{remark}

For the proof of the main result of this section, we need the elementary property
\begin{equation}
\label{eq:map_bn}
\MAp(\ind_{B^n};\cdot) = \kappa_n \delta_{o}
\end{equation}
which is a special case of Lemma~\ref{le:MAp_bodies}. The next result is the equivalent version of Theorem~\ref{thm:add_kin_form_ma} on $\fconvs$.

\begin{theorem}
	\label{thm:add_kin_form_map}
	If $0\leq j\leq n$ and $\alpha\colon [0,\infty)\to[0,\infty)$ is measurable, then
	\begin{multline*}
	\kappa_n \int_{\SO(n)} \int_{\Rn} \alpha(|y|) \d\MAp_j\big(u\infconv(v\circ \vartheta^{-1});y\big) \d\vartheta\\
	=\sum_{i=0}^j \binom{j}{i} \int_{\Rn}\int_{\Rn} \alpha(\max\{|x|,|y|\})\d\MAp_{j-i}(v;y)\d\MAp_i(u;x)
	\end{multline*}
	for $u,v\in\fconvs$.
\end{theorem}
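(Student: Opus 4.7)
My plan is to apply the Hadwiger theorem on $\fconvs$ (Theorem~\ref{thm:hadwiger_fconvs}) twice: first in $u$ for fixed $v$, and then in $v$ after identifying the first set of densities. I would begin by reducing to $\alpha\in C_c([0,\infty))$ via monotone convergence, since both sides of~\eqref{eq:add_kin_form_ma} are linear in $\alpha$ and involve positive Radon measures. Fixing such an $\alpha$ and $v\in\fconvs$, let $\Phi_v(u)$ denote the left-hand side as a functional of $u$. By Lemma~\ref{le:int_son_cont}, $\Phi_v$ is a continuous, epi-translation and rotation invariant valuation on $\fconvs$, so Theorem~\ref{thm:hadwiger_fconvs} yields a unique decomposition $\Phi_v(u)=\sum_{k=0}^n\int_{\Rn}\alpha_k^v(|x|)\,\d\MAp_k(u;x)$ with densities $\alpha_k^v\in C_c([0,\infty))$. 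Expanding $\MAp_j(u\infconv(v\circ\vartheta^{-1});\cdot)$ via the polynomial rule~\eqref{eq:cma_poly_exp} and matching the parts that are epi-homogeneous of degree $k$ in $u$ (using the uniqueness of the Hadwiger decomposition) shows that $\alpha_k^v=0$ for $k>j$, and for $0\leq k\leq j$ it expresses $\int\alpha_k^v(|x|)\,\d\MAp_k(u;x)$ as $\kappa_n\binom{j}{k}$ times the $\SO(n)$-averaged integral of $\alpha(|y|)$ against $\MAp(u[k],(v\circ\vartheta^{-1})[j-k],\ind_{B^n}[n-j];\cdot)$.

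\textbf{Specialization and second Hadwiger step.} To identify $\alpha_k^v(t)$ for $k\geq 1$, I would substitute $u=u_t$. Since $u_t$, $\ind_{B^n}$ and the test function $\alpha(|\cdot|)$ are all $\SO(n)$-invariant, the transformation law $\MAp(w_1\circ\varphi,\ldots,w_n\circ\varphi;B)=\MAp(w_1,\ldots,w_n;\varphi B)$ applied to all arguments simultaneously forces the $\vartheta$-integrand to be constant in $\vartheta$, and then Lemma~\ref{le:int_map_ut} gives
\[
\alpha_k^v(t)=\binom{j}{k}\int_{\Rn}\alpha(|y|)\,\d\MAp(u_t[k],v[j-k],\ind_{B^n}[n-j];y).
\]
With $t$ and $k$ now fixed, Lemma~\ref{le:mixed_val} combined with the rotation invariance of $u_t$ and $\ind_{B^n}$ shows that $v\mapsto\alpha_k^v(t)$ is itself a continuous, epi-translation and rotation invariant valuation on $\fconvs$, epi-homogeneous of degree $j-k$. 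A second application of Theorem~\ref{thm:hadwiger_fconvs}, followed by the substitution $v=u_s$ and another use of Lemma~\ref{le:int_map_ut}, reduces everything to a closed-form evaluation of
$I_{k,j}(t,s):=\int_{\Rn}\alpha(|y|)\,\d\MAp(u_t[k],u_s[j-k],\ind_{B^n}[n-j];y)$.

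\textbf{The key computation.} The hard part is to prove $I_{k,j}(t,s)=\kappa_n\alpha(\max\{s,t\})$ for $1\leq k\leq j-1$; the endpoint cases $k\in\{0,j\}$ follow directly from Lemma~\ref{le:int_map_ut}. My plan is to treat all $k$ simultaneously through the polynomial identity (in $\lambda,\mu\geq 0$)
\[
\MAp_j\bigl((\lambda\sq u_t)\infconv(\mu\sq u_s);rB^n\bigr)=\sum_{k=0}^j\binom{j}{k}\lambda^k\mu^{j-k}\,\MAp(u_t[k],u_s[j-k],\ind_{B^n}[n-j];rB^n),
\]
which follows from~\eqref{eq:cma_poly_exp} together with $\MAp(\lambda\sq u;\cdot)=\lambda\,\MAp(u;\cdot)$. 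For $j<n$, Kubota's formula (Lemma~\ref{le:ck_map}) reduces the left-hand side to the $j$-dimensional Monge--Amp\`ere mass of the explicit conjugate $(\lambda\sq u_t\infconv\mu\sq u_s)^*=\lambda(|\cdot|-t)_+ +\mu(|\cdot|-s)_+$ on the ball $rB^j$ (for $j=n$ no projection is needed). Assuming $s\leq t$ and reading off the subdifferential image of $rB^j$ as $\{0\}$, $\mu B^j$, or $(\lambda+\mu)B^j$ in the three cases $r<s$, $s\leq r<t$, and $r\geq t$, respectively, the mass on $rB^n$ equals $\kappa_n\mu^j\mathbf{1}_{\{s\leq r<t\}}+\kappa_n(\lambda+\mu)^j\mathbf{1}_{\{r\geq t\}}$. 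Comparing coefficients of $\lambda^k\mu^{j-k}$ then yields $\MAp(u_t[k],u_s[j-k],\ind_{B^n}[n-j];rB^n)=\kappa_n\mathbf{1}_{\{r\geq\max\{s,t\}\}}$ for $1\leq k\leq j-1$, so the mixed measure is concentrated on $\max\{s,t\}\sn$ with total mass $\kappa_n$ and $I_{k,j}(t,s)=\kappa_n\alpha(\max\{s,t\})$. Substituting this back through both Hadwiger identifications gives $\alpha_k^v(t)=\binom{j}{k}\int_{\Rn}\alpha(\max\{|y|,t\})\,\d\MAp_{j-k}(v;y)$, and summing over $k$ (reindexed as $i$) produces exactly the right-hand side of the claim.
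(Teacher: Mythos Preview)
Your proposal is correct and follows essentially the same route as the paper: a double application of the functional Hadwiger theorem (Theorem~\ref{thm:hadwiger_fconvs}), identification of the densities by evaluating at the test functions $u_t$ via Lemma~\ref{le:int_map_ut}, and an explicit computation for $(\lambda\sq u_t)\infconv(\mu\sq u_s)$ using the Kubota-type formula (Lemma~\ref{le:ck_map}) followed by comparison of coefficients in $\lambda,\mu$. The only organisational difference is that you invoke the multilinear expansion of $\MAp_j(u\infconv w;\cdot)$ up front to isolate the degree-$k$ pieces as mixed integrals, whereas the paper carries unknown densities $\alpha_{i,k}$ through and matches all coefficients simultaneously at the pair $(\lambda\sq u_s,\mu\sq u_t)$; both lead to the same Kubota computation. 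One small slip: the identity you quote as ``$\MAp(\lambda\sq u;\cdot)=\lambda\,\MAp(u;\cdot)$'' is false as written (the correct exponent is $n$); what you actually need, and what makes your polynomial identity valid, is the degree-one epi-homogeneity of the \emph{mixed} measure in each individual slot, which does follow from~\eqref{eq:cma_poly_exp}.
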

\begin{proof}
For the proof, it is sufficient to consider a function $\alpha\in C_c({[0,\infty)})$. 
	If $j=0$, the statement trivially follows from \eqref{eq:map_bn}. Thus, we will assume $1\leq j \leq n$ throughout the following. For $u,v\in\fconvs$ set
	\begin{equation}
	\label{eq:def_oz_u_v}
	\oZ(u,v)=\int_{\SO(n)} \int_{\Rn} \alpha(|y|) \d\MAp_j\big(u\infconv(v\circ \vartheta^{-1});y\big) \d\vartheta.
	\end{equation}
	For fixed $\bar{v}\in\fconvs$ it follows from Lemma~\ref{le:int_son_cont} that $u\mapsto \oZ(u,\bar{v})$ is a continuous, epi-translation and rotation invariant valuation on $\fconvs$. Thus, by Theorem~\ref{thm:hadwiger_fconvs} there exist functions $\alpha_{i,\bar{v}}\in C_c([0,\infty))$, $0\leq i\leq n$, such that
	$$\oZ(u,\bar{v})= \sum_{i=0}^n \int_{\Rn} \alpha_{i,\bar{v}}(|y|)\d\MAp_i(u;y)$$
	for every $u\in\fconvs$. In particular, if we choose $u=\lambda\sq u_t$, where $u_t=t|\cdot|+\ind_{B^n}$, it follows from \eqref{eq:map_bn}, Lemma~\ref{le:mixed_val}, and Lemma~\ref{le:int_map_ut} that
	\begin{equation}
	\label{eq:oz_lambda_u_t_bar_v}
	\oZ(\lambda \sq u_t,\bar{v}) = \kappa_n \left(\alpha_{0,\bar{v}}(0) + \sum_{i=1}^n  \lambda^i \alpha_{i,\bar{v}}(t)\right)
	\end{equation}
	for every $t,\lambda\geq 0$. Since for every fixed $\bar{u}\in\fconvs$ also the map $v\mapsto \oZ(\bar{u},v)$ is a continuous, epi-translation and rotation invariant valuation, it follows from \eqref{eq:oz_lambda_u_t_bar_v} together with homogeneity that for every fixed $\bar{t}\geq 0$ each of the maps 
	$$v\mapsto \alpha_{0,{v}}(0)\quad\text{and}\quad v\mapsto \alpha_{i,v}(\bar{t}),\;1\leq i\leq n,$$
	is a continuous, epi-translation and rotation invariant valuation. Thus, by Theorem~\ref{thm:hadwiger_fconvs} there exist functions $\alpha_{0,k}\in C_c([0,\infty)), \alpha_{i,k}(\bar{t},\cdot)\in C_c([0,\infty))$, $1\leq i\leq n$, $0\leq k\leq n$, such that
	$$\alpha_{0,v}(0)=\sum_{k=0}^n \int_{\Rn} \alpha_{0,k}(|y|) \d\MAp_k(v;y)$$
	and
	$$\alpha_{i,v}(\bar{t}) =\sum_{k=0}^n \int_{\Rn} \alpha_{i,k}(\bar{t},|y|)\d\MAp_k(v;y)$$
	for every $v\in\fconvs$ and $1\leq i\leq n$. We thus have
	\begin{align}
	\begin{split}
	\label{eq:oz_alpha_ik}
	\oZ(u,v)&= \kappa_n \alpha_{0,v}(0)+ \sum_{i=1}^n \int_{\Rn} \alpha_{i,v}(|x|)\d\MAp_i(u;x)\\
	&= \kappa_n \sum_{k=0}^n \int_{\Rn}\alpha_{0,k}(|y|)\d\MAp_k(v;y)\\
	&\qquad +\sum_{i=1}^n\sum_{k=0}^n \int_{\Rn} \int_{\Rn} \alpha_{i,k}(|x|,|y|) \d\MAp_k(v;y)\d\MAp_i(u;x)
	\end{split}
	\end{align}
	for every $u,v\in\fconvs$.
	
	It remains to determine the relation between $\alpha_{i,k}$ and $\alpha$. In order to do this, we will evaluate $\oZ(u,v)$ at $u=\lambda\sq u_s$ and $v=\mu \sq u_t$ with $\lambda,\mu,s,t\geq 0$. Notice that \[\lambda\sq u_s(x)=s|x|+\ind_{\lambda B^n}(x) \quad \text{and} \quad (\mu\sq u_t)\circ \vartheta^{-1}(x)=t|x|+\ind_{\mu B^n}(x) \] 
 for  $\vartheta\in \SO(n)$ and $x\in\R^n$.
 For $0\le s\le t$ we now have 
 \begin{align*}
\left((\lambda\sq u_s)\infconv \big((\mu \sq u_t)\circ \vartheta^{-1}\big)\right) (x)
     &=\inf\left\{s|x-y|+t|y|:x-y\in \lambda B^n,y\in\mu B^n\right\}\\
     &=\begin{cases}s|x|,& |x|\leq \lambda,\\
	s\lambda+t(|x|-\lambda),& \lambda <|x|\leq \lambda+\mu,\\
	+\infty,& \lambda+\mu<|x|,\end{cases}.
 \end{align*}
 Hence,
	$$\proj_E\left((\lambda\sq u_s)\infconv \big((\mu \sq u_t)\circ \vartheta^{-1}\big)\right) (x_E) = \begin{cases}s|x_E|,& |x_E|\leq \lambda,\\
	s\lambda+t(|x_E|-\lambda),& \lambda <|x_E|\leq \lambda+\mu,\\
	+\infty,& \lambda+\mu<|x_E|,\end{cases}$$
	for every $\vartheta\in \SO(n)$ and $E\in\Grass{j}{n}$. Thus, it follows from \eqref{eq:def_oz_u_v} and Lemma~\ref{le:ck_map} (for $j=n$ the lemma holds trivially) that
	\begin{align}\label{eq:Zusut}
	\oZ(\lambda\sq u_s,\mu \sq u_t)&=\int_{\Rn}\alpha(|y|)\d\MAp_j\big((\lambda\sq u_s)\infconv (\mu \sq u_t);y\big)\nonumber\\
	&= \frac{\kappa_n}{\kappa_j}\int_{\Grass{j}{n}}\int_E \alpha(|y_E|)\d\MAp_E \big(\proj_E \big((\lambda\sq u_s)\infconv (\mu \sq u_t)\big);y_E\big) \d E\nonumber\\
	&=\frac{\kappa_n}{\kappa_j} \int_{\Grass{j}{n}}\int_{(\lambda+\mu)B^n} \alpha\left(\left\vert\nabla\proj_E\big((\lambda\sq u_s)\infconv (\mu \sq u_t)\big)(x_E) \right|\right) \d x_E \d E\nonumber\\
	&=\kappa_n \left(\lambda^j \alpha(s) + \big((\lambda+\mu)^j-\lambda^j\big) \alpha(t)\right)\nonumber\\
	&= \kappa_n \left(\lambda^j \alpha(s)+\sum_{i=0}^{j-1}\binom{j}{i}\lambda^i\mu^{j-i} \alpha(t) \right)
	\end{align}
    for $0\leq s\leq t$.
	On the other hand, by \eqref{eq:oz_alpha_ik}, \eqref{eq:map_bn}, and Lemma~\ref{le:int_map_ut},
	\begin{multline}\label{eq:Zusut2}
	\oZ(\lambda\sq u_s,\mu \sq u_t)\\
    = \kappa_n^2\left(\alpha_{0,0}(0)+\sum_{k=1}^n \mu^k \alpha_{0,k}(t) \right) + \kappa_n^2\sum_{i=1}^n\lambda^i \left(\alpha_{i,0}(s,0)+\sum_{k=1}^n \mu^k \alpha_{i,k}(s,t)\right).
	\end{multline}
	Since $\lambda,\mu\geq 0$ were arbitrary, we can compare coefficients of the last two equations to obtain
	$$
	\kappa_n \alpha_{j,0}(s,0)=\alpha(s),\quad \kappa_n \alpha_{0,j}(t)=\alpha(t),\quad \kappa_n \alpha_{i,j-i}(s,t)=\binom{j}{i}\alpha(t),
	$$
    for $1\leq i \leq j-1$ and $\alpha_{i,k}(s,t)=0$ if $i+k\neq j$ whenever $0\leq s \leq t$.
	
	Proceeding for the case $s\ge t$  similarly as in the derivation of \eqref{eq:Zusut}, we  obtain
$$
\oZ(\lambda\sq u_s,\mu \sq u_t) =\kappa_n \left(\mu^j \alpha(t)+\sum_{i=0}^{j-1}\binom{j}{i}\mu^i\lambda^{j-i} \alpha(s) \right).
$$
A comparison with the coefficients available from \eqref{eq:Zusut2} shows that 
	$$\kappa_n \alpha_{j,0}(s,0)=\alpha(s), \quad \kappa_n \alpha_{0,j}(t)=\alpha(t),\quad \kappa_n \alpha_{i,j-i}(s,t)=\binom{j}{i}\alpha(s),$$
    for $1\leq i \leq j-1$ and $\alpha_{i,k}(s,t)=0$ if $i+k\neq j$, also for $s\ge t$. 
    
    The claim now follows after considering \eqref{eq:map_bn}.
\end{proof}

\section{Further Formulas}
\label{se:further_results}
For $\alpha\in C_c({[0,\infty)})$ and $0\leq j\leq n$, we define the $j$th \emph{functional intrinsic volume} on $\fconvs$ with density $\alpha$, denoted by $\oZZb{j}{\alpha}$, as
\begin{equation}
\label{eq:ozzb_map}
\oZZb{j}{\alpha}(u)=\binom{n}{j}\frac{1}{\kappa_{n-j}} \int_{\Rn} \alpha(|y|)\d\MAp_j(u;y)
\end{equation}
for $u\in\fconvs$. Clearly, $\oZZb{j}{\alpha}^*(v)=\oZZb{j}{\alpha}(v^*)$ for every $v\in\fconvf$.

By \eqref{eq:ozzb_map}, we have the following reformulation of Theorem~\ref{thm:add_kin_form_map}.

\begin{theorem}
\label{thm:kin_bis}
If $0\leq j\leq n$ and $\alpha\in C_c([0,\infty))$, then
\begin{multline}
\label{eq:kin_bis}
\kappa_n \kappa_{n-j} \int_{\SO(n)} \oZZb{j}{\alpha}\big(u\infconv (v\circ\vartheta^{-1})\big) \d\vartheta\\
=\binom{n}{j} \sum_{i=0}^j \binom{j}{i} \int_{\Rn}\int_{\Rn} \alpha(\max\{|x|,|y|\})\d\MAp_{j-i}(v;y)\d\MAp_i(u;x)
\end{multline}
for $u,v\in\fconvs$.
\end{theorem}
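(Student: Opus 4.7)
The statement is a direct reformulation of Theorem~\ref{thm:add_kin_form_map} in terms of the functionals $\oZZb{j}{\alpha}$. The plan is to start from the left-hand side of \eqref{eq:kin_bis}, unfold the definition \eqref{eq:ozzb_map} of $\oZZb{j}{\alpha}$, interchange the two integrations (over $\SO(n)$ and over $\R^n$), and then invoke Theorem~\ref{thm:add_kin_form_map} to obtain precisely the right-hand side.

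In more detail, I would first substitute
$$\oZZb{j}{\alpha}\bigl(u\infconv(v\circ\vartheta^{-1})\bigr)=\binom{n}{j}\frac{1}{\kappa_{n-j}}\int_{\Rn}\alpha(|y|)\,\d\MAp_j\bigl(u\infconv(v\circ\vartheta^{-1});y\bigr)$$
into the left-hand side of \eqref{eq:kin_bis}. Pulling the constant prefactor $\binom{n}{j}/\kappa_{n-j}$ out of the rotation average and multiplying by the $\kappa_n\kappa_{n-j}$ sitting in front yields
$$\binom{n}{j}\,\kappa_n\int_{\SO(n)}\int_{\Rn}\alpha(|y|)\,\d\MAp_j\bigl(u\infconv(v\circ\vartheta^{-1});y\bigr)\,\d\vartheta.$$
Theorem~\ref{thm:add_kin_form_map} applied to the inner double integral now gives exactly
$$\binom{n}{j}\sum_{i=0}^{j}\binom{j}{i}\int_{\Rn}\int_{\Rn}\alpha(\max\{|x|,|y|\})\,\d\MAp_{j-i}(v;y)\,\d\MAp_i(u;x),$$
which is the right-hand side of \eqref{eq:kin_bis}.

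Essentially no obstacle arises: the only technical point to address is that Theorem~\ref{thm:add_kin_form_map} was stated for measurable nonnegative $\alpha$, whereas here we are handed $\alpha\in C_c([0,\infty))$. Since continuous compactly supported functions can be decomposed into positive and negative parts $\alpha=\alpha^+-\alpha^-$ with $\alpha^\pm\in C_c([0,\infty))$ nonnegative and measurable, linearity of the integrals on both sides in $\alpha$ lets us reduce to the nonnegative case covered by Theorem~\ref{thm:add_kin_form_map}. In short, the entire argument is a one-line substitution once the previous theorem is in hand, and no new estimate or approximation is required.
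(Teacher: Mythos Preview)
Your proposal is correct and matches the paper's approach exactly: the paper introduces Theorem~\ref{thm:kin_bis} simply as ``the following reformulation of Theorem~\ref{thm:add_kin_form_map}'' via the definition \eqref{eq:ozzb_map}, without giving a separate proof. Your observation about splitting $\alpha$ into nonnegative parts to match the hypothesis of Theorem~\ref{thm:add_kin_form_map} is the only detail to add, and it is routine.
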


If in \eqref{eq:kin_bis} we choose $v$ to be the convex indicator function $\ind_L$ of a convex body $L \in \Kn$, then a direct application of Lemma \ref{le:MAp_bodies} gives a specialization of Theorem~\ref{thm:kin_bis} reading as follows, where we write
$$
\stirling{k}{j}:=\binom{k}{j}\frac{\kappa_k}{\kappa_j \kappa_{k-j}}
$$
for $j,k\in\N$.

\begin{corollary}
\label{cor:add_kin_ind}
    If $0\leq j\leq n$ and $\alpha\in C_c([0,\infty))$, then
    \begin{equation*}
        \int_{\SO(n)}\oZZb{j}\alpha (u \infconv \ind_{\vartheta L}) \d \vartheta =\sum_{i=0}^j \stirling{n-i}{j-i}{\stirling{n}{j-i}}^{-1}\oZZb{i}\alpha (u) V_{j-i}(L)
    \end{equation*}
    for $u \in \fconvs$ and $L \in \K^n$. 
\end{corollary}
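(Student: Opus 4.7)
The plan is to apply Theorem~\ref{thm:kin_bis} with the specific choice $v = \ind_L \in \fconvs$, noting that $\ind_L \circ \vartheta^{-1} = \ind_{\vartheta L}$ for every $\vartheta \in \SO(n)$. The left-hand side of \eqref{eq:kin_bis} then becomes $\kappa_n \kappa_{n-j}$ times the quantity we wish to compute, so the whole statement is a matter of simplifying the right-hand side of \eqref{eq:kin_bis} for this $v$.

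To do this, I would invoke Lemma~\ref{le:MAp_bodies} with $j$ replaced by $j-i$ to obtain
\[
\MAp_{j-i}(\ind_L;\cdot) = \frac{\kappa_{n-j+i}}{\binom{n}{j-i}} V_{j-i}(L)\, \delta_o.
\]
Since this measure is concentrated at the origin, the inner $y$-integral in \eqref{eq:kin_bis} collapses to $\alpha(\max\{|x|,0\}) = \alpha(|x|)$ multiplied by the above constant. The remaining $x$-integral $\int_{\Rn} \alpha(|x|)\, \d\MAp_i(u;x)$ is then, by the definition \eqref{eq:ozzb_map} of the functional intrinsic volume, equal to $\kappa_{n-i}\binom{n}{i}^{-1}\oZZb{i}{\alpha}(u)$.

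Putting these substitutions together and dividing through by $\kappa_n \kappa_{n-j}$, one arrives at an identity of the claimed form, with coefficient
\[
c_{i,j,n} \;=\; \frac{1}{\kappa_n \kappa_{n-j}}\, \binom{n}{j}\binom{j}{i}\, \frac{\kappa_{n-j+i}\, \kappa_{n-i}}{\binom{n}{j-i}\binom{n}{i}}
\]
in front of $\oZZb{i}{\alpha}(u) V_{j-i}(L)$. It then remains to verify that $c_{i,j,n} = \stirling{n-i}{j-i}\stirling{n}{j-i}^{-1}$. Unwinding $\stirling{k}{j} = \binom{k}{j}\kappa_k/(\kappa_j \kappa_{k-j})$, the $\kappa_{j-i}$ factors cancel and the identity reduces to the standard binomial identity $\binom{n}{j}\binom{j}{i} = \binom{n}{i}\binom{n-i}{j-i}$.

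The only real obstacle here is the accurate bookkeeping of the binomial and $\kappa$ constants; there is no additional analytic content beyond Theorem~\ref{thm:kin_bis} and Lemma~\ref{le:MAp_bodies}, and the hypotheses on $\alpha$ match those of Theorem~\ref{thm:kin_bis}, so the specialization is legitimate. The case $j=0$ is trivial from \eqref{eq:map_bn} and fits the stated formula with the sole term $i=0$.
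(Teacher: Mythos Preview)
Your proposal is correct and follows essentially the same route as the paper's proof: both apply Theorem~\ref{thm:kin_bis} with $v=\ind_L$, use Lemma~\ref{le:MAp_bodies} to collapse the inner integral via the Dirac measure, and then reduce the resulting coefficient to $\stirling{n-i}{j-i}\stirling{n}{j-i}^{-1}$ by the identity $\binom{n}{j}\binom{j}{i}=\binom{n}{i}\binom{n-i}{j-i}$.
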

\begin{proof}
    It follows from Lemma~\ref{le:MAp_bodies} that for $u \in \fconvs$, $L \in \K^n$, and $0\leq j \leq n$, we have \[\binom{n}{j}\MAp_j(\ind_L;B)=\kappa_{n-j}V_j(L)\delta_0(B)\] for Borel sets $B \subseteq \Rn$. Applying Theorem~\ref{thm:kin_bis}, we then infer 
    \begin{align*}
        \int_{\SO(n)}&\oZZb{j}{\alpha}(u\infconv I_{\vartheta L})\d \vartheta\\
        &=\frac{\binom{n}{j}}{\kappa_n \kappa_{n-j}} \sum_{i=0}^j \binom{j}{i} \int_{\Rn} \int_{\Rn} \alpha(\max \{|x|,|y|\})\d \MAp_{j-i}(\ind_L;y)\d \MAp_i(u;x)\\
        &=\frac{\binom{n}{j}}{\kappa_n \kappa_{n-j}}\sum_{i=0}^j \frac{\binom{j}{i}\kappa_{n-(j-i)}}{\binom{n}{j-i}} V_{j-i}(L)\int_{\Rn}\alpha(|x|)\d \MAp_i(u;x)\\
        &=\sum_{i=0}^j\frac{\binom{n}{j}\binom{j}{i}}{\binom{n}{j-i}\binom{n}{i}}\frac{\kappa_{n-i}\kappa_{n-(j-i)}}{\kappa_n \kappa_{n-j}}\oZZb{i}{\alpha}(u)V_{j-i}(L)\\
        &=\sum_{i=0}^j \stirling{n-i}{j-i}{\stirling{n}{j-i}}^{-1}\oZZb{i}\alpha (u) V_{j-i}(L),
    \end{align*}
    where we used that $\alpha(\max\{|x|,0\})=\alpha(|x|)$ for $x\in\Rn$ and
    $$
    \frac{\binom{n}{j}\binom{j}{i}}{\binom{n}{i}}=\binom{n-i}{j-i}
    $$
    for $0\leq i\leq j\leq n$.
\end{proof}

Note that if in the last result we furthermore also choose $u$ to be the indicator function of a convex body $K\in\Kn$, then we recover the additive kinematic formula for convex bodies \eqref{eq:additive_kinematic_formula}, which can be written as
$$
\int_{\SO(n)} V_j(K+\vartheta L)\d\vartheta=\sum_{i=0}^j \stirling{n-i}{j-i}{\stirling{n}{j-i}}^{-1}V_i(K) V_{j-i}(L).
$$

Next, we consider mixed functionals. A corollary of \eqref{eq:additive_kinematic_formula} for the mixed volume $V:(\Kn)^n\to\R$ states that if $0\leq j\leq n$, then
\begin{equation}
\label{eq:formula_mixed_volume}
\binom{n}{j} \int_{\SO(n)} V(K[j],\vartheta L [n-j]) \d \vartheta = \stirling{n}{j}^{-1} V_j(K) V_{n-j}(L)
\end{equation}
for every $K,L\in\Kn$. See, for example, formula (6.7) in \cite{schneider_weil}.

For $\alpha\in C_c({[0,\infty)})$ we define the operator $\oVb_\alpha$ on $(\fconvs)^n$ as
$$
\oVb_\alpha(u_1,\ldots,u_n)= \int_{\Rn} \alpha(|y|)\d \MAp(u_1,\ldots,u_n;y)
$$
for $u_1,\ldots,u_n\in\fconvs$. Clearly, by the properties of the conjugate mixed Monge--Amp\`ere measure, the functional $\oVb_\alpha$ is symmetric in its entries. Moreover, in each of its arguments it is continuous with respect to epi-convergence, epi-homogeneous of degree 1, and epi-translation invariant. We remark that functionals of this form were also treated in \cite{Alesker_19} and, from a valuation point of view, in \cite{Colesanti-Ludwig-Mussnig-4,Knoerr_support}.

By Lemma~\ref{le:MAp_bodies}, it is immediate to check that $\oVb_\alpha$ generalizes the classical mixed volumes, i.e.,
$$\oVb_\alpha(\ind_{K_1},\ldots,\ind_{K_n})=\alpha(0)V(K_1,\dots,K_n)
$$
for $K_1,\dots,K_n \in \Kn$. We have the following functional integral formula which includes \eqref{eq:formula_mixed_volume} in the special case where $k=n$, $u=\ind_K$, and $v=\ind_L$ (see also \cite[Lemma 5.8]{Hug_Weil_Lectures}).

\begin{corollary}
\label{cor:func_mix}
If $0\leq j\leq k\le  n$ and $\alpha\in C_c({[0,\infty)})$, then
\begin{align*}
&\int_{\SO(n)} \oVb_{\alpha}\big(u_1,\ldots,u_j,v_1\circ \vartheta^{-1},\ldots,v_{k-j}\circ \vartheta^{-1},\ind_{B^n}[n-k]\big)\d\vartheta\\
&\;= \frac{1}{\kappa_n} \int_{\Rn}\int_{\Rn} \alpha(\max\{|x|,|y|\}) \d\MAp_{j}(u_1,\ldots,u_j;x) \d \MAp_{k-j}(v_1,\ldots,v_{k-j};y)
\end{align*}
for $u_1,\ldots,u_j,v_1,\ldots,v_{k-j}\in\fconvs$.
\end{corollary}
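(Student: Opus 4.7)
The plan is to derive Corollary~\ref{cor:func_mix} from Theorem~\ref{thm:add_kin_form_map} by polarization. Applying that theorem with $k$ in place of $j$ and substituting
$$u=(\lambda_1\sq u_1)\infconv\cdots\infconv (\lambda_j\sq u_j),\qquad v=(\mu_1\sq v_1)\infconv\cdots\infconv (\mu_{k-j}\sq v_{k-j})$$
with $\lambda_1,\ldots,\lambda_j,\mu_1,\ldots,\mu_{k-j}\ge 0$, the expansion \eqref{eq:cma_poly_exp} (and the multilinearity of the mixed conjugate Monge--Amp\`ere measure that it induces together with symmetry) turns both sides of the identity into polynomials in these $k$ variables. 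The proof then reduces to comparing the coefficient of the monomial $\lambda_1\cdots\lambda_j\mu_1\cdots\mu_{k-j}$ on each side.

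On the left, $u\infconv(v\circ\vartheta^{-1})$ is a single epi-sum involving all $k$ functions $u_1,\ldots,u_j,v_1\circ\vartheta^{-1},\ldots,v_{k-j}\circ\vartheta^{-1}$ with the prescribed scalars. Expanding $\MAp_k(u\infconv(v\circ\vartheta^{-1});\cdot)$ multilinearly, the monomial $\lambda_1\cdots\lambda_j\mu_1\cdots\mu_{k-j}$ arises precisely from the $k!$ permutations of the index $k$-tuple $(1,\ldots,k)$ and, by symmetry of the mixed Monge--Amp\`ere measure in its entries, has coefficient $k!\cdot\MAp(u_1,\ldots,u_j,v_1\circ\vartheta^{-1},\ldots,v_{k-j}\circ\vartheta^{-1},\ind_{B^n}[n-k];\cdot)$. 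Integrating against $\alpha(|y|)$ and over $\SO(n)$, the contribution to the left-hand side of Theorem~\ref{thm:add_kin_form_map} is thus exactly $k!\kappa_n$ times the left-hand side of the corollary.

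On the right, $\MAp_i(u;\cdot)$ is a homogeneous polynomial of degree $i$ in $\lambda_1,\ldots,\lambda_j$, while $\MAp_{k-i}(v;\cdot)$ is homogeneous of degree $k-i$ in $\mu_1,\ldots,\mu_{k-j}$; consequently only the summand with $i=j$ can contain $\lambda_1\cdots\lambda_j\mu_1\cdots\mu_{k-j}$, and its coefficient there is $\binom{k}{j}\cdot j!\cdot(k-j)!=k!$ times the double integral on the right-hand side of the corollary. Dividing the resulting identity by $k!\kappa_n$ yields the claim. The only real obstacle is the combinatorial bookkeeping of the multilinear expansion and coefficient extraction; no analytic input beyond Theorem~\ref{thm:add_kin_form_map} is needed.
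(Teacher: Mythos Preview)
Your proposal is correct and follows essentially the same approach as the paper: both derive the corollary from Theorem~\ref{thm:add_kin_form_map} by expanding the mixed Monge--Amp\`ere measures multilinearly and comparing coefficients. The only organizational difference is that the paper first introduces a single parameter $\varepsilon$ (replacing $v$ by $\varepsilon\sq v$) to isolate the summand $i=j$ and then invokes multilinearity to pass to distinct $u_1,\ldots,u_j$ and $v_1,\ldots,v_{k-j}$, whereas you carry out the full polarization in all $k$ variables at once.
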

\begin{proof}
Let $u,v\in\fconvs$. For $\varepsilon>0$ it follows from Theorem~\ref{thm:kin_bis} and the properties of the measures $\MAp_{n-j}(v;\cdot)$ that
\begin{multline*}
\int_{\SO(n)} \oZZb{k}{\alpha}\big(u\infconv \varepsilon\sq (v\circ\vartheta^{-1})\big) \d\vartheta\\
= \frac{1}{\kappa_n\kappa_{n-k}} \binom{n}{k}\sum_{i=0}^k \binom{k}{i} \varepsilon^{k-i} \int_{\Rn} \int_{\Rn} \alpha(\max\{|x|,|y|\}) \d\MAp_{k-i}(v;y)\d\MAp_{i}(u;x)
\end{multline*}
for every $u,v\in\fconvs$. On the other hand, by \eqref{eq:cma_poly_exp} we have
\begin{align*}
&\int_{\SO(n)} \oZZb{k}{\alpha}\big(u\infconv \varepsilon\sq (v\circ\vartheta^{-1})\big) \d\vartheta\\
&= \frac{1}{\kappa_{n-k}} \binom{n}{k} \int_{\SO(n)} \int_{\Rn} \alpha(|x|) \d\MA^*_k\big(u\infconv \varepsilon\sq (v\circ\vartheta^{-1});x\big)\d\vartheta\\
&= \frac{1}{\kappa_{n-k}} \binom{n}{k} \sum_{i=0}^k \binom{k}{i}\varepsilon^{k-i} \int_{\SO(n)}\int_{\Rn} \alpha(|x|) \d\MA^*\big(u[i],v\circ \vartheta^{-1}[k-i],\ind_{B^n}[n-k];x\big)\d\vartheta\\
&= \frac{1}{\kappa_{n-k}} \binom{n}{k} \sum_{i=0}^k \binom{k}{i}\varepsilon^{k-i} \int_{\SO(n)} \oVb_{\alpha}\big(u[i],v\circ\vartheta^{-1}[k-i],\ind_{B^n}[n-k]\big)\d\vartheta.
\end{align*}
The result now follows after comparing coefficients together with multilinearity.
\end{proof}

As an application, Corollary \ref{cor:func_mix} can be used to obtain generalizations of further formulas. In particular, mimicking the so-called \emph{Minkowski difference} (see, for example, \cite[Note 3 of Section 6.1]{schneider_weil}), we can introduce the operation of inf-deconvolution. If for $u,v \in \fconvs$, there exists $w \in \fconvs$ such that
$$
w \infconv v = u,
$$
then we say that $w$ is the \emph{inf-deconvolution} of $u$ and $v$, which we denote by
$$
w=u \diffconv v.
$$
Equivalently, this means that $u\diffconv v$ exists if and only if the (pointwise) difference $u^*-v^*$ is an element of $\fconvf$ and
$$(u\diffconv v)^*=u^*-v^*.$$
Moreover, we say that $v$ \textit{rolls freely} in $u$ if for every $\vartheta \in \SO(n)$ the expression $u \diffconv (v \circ \vartheta^{-1})$ is well-defined. With this new terminology at hand, we obtain the following consequence of Corollary \ref{cor:func_mix}.

\begin{corollary}
\label{cor:diffconv}
Let $0\le k \le n$, $\alpha\in C_c({[0,\infty)})$, and $u,v \in \fconvs$. If $v$ rolls freely in $u$, then 
\begin{align}
\label{eqdiffconv}
   & \int_{\SO(n)} \oZZb{k}{\alpha}\big(u \diffconv (v\circ \vartheta^{-1})\big) d\vartheta\\
   &=\frac{\binom{n}{k}}{\kappa_n\kappa_{n-k}} \sum_{j=0}^k(-1)^{k-j} \binom{k}{j} \int_{\R^n} \int_{\R^n} \alpha(\max\{|x|,|y|\})\d \MAp_{k-j}(v;y)\d\MAp_{j}(u;x).\nonumber
\end{align}
\end{corollary}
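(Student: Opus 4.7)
The plan is to derive a pointwise alternating-sum formula expressing $\MAp_k(w_\vartheta;\cdot)$, with $w_\vartheta:= u \diffconv (v\circ\vartheta^{-1})$, as a signed combination of mixed Monge--Amp\`ere measures involving only $u$ and $v\circ\vartheta^{-1}$, and then to average over $\SO(n)$ using Corollary~\ref{cor:func_mix}.

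Fix $\vartheta\in\SO(n)$. The defining relation $w_\vartheta\infconv (v\circ\vartheta^{-1}) = u$ dualizes to the pointwise identity $u^*=w_\vartheta^*+(v\circ\vartheta^{-1})^*$ in $\fconvf$, so for every $\lambda_1,\lambda_2\geq 0$ the functions
$$
\lambda_1 u^* + \lambda_2 (v\circ\vartheta^{-1})^* \quad \text{and} \quad \lambda_1 w_\vartheta^* + (\lambda_1+\lambda_2)(v\circ\vartheta^{-1})^*
$$
coincide as elements of $\fconvf$. Applying the polynomial expansion \eqref{eq:def_mixed_ma} to $\MA((\cdot)[k], h_{B^n}[n-k];\cdot)$ evaluated on each of these two descriptions and equating the results produces an identity between polynomials in $(\lambda_1,\lambda_2)$ whose coefficients are mixed Monge--Amp\`ere measures; since the identity is valid on the open quadrant $(0,\infty)^2$, it holds as a formal polynomial identity. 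Evaluating at $(\lambda_1,\lambda_2)=(1,-1)$, where $(\lambda_1+\lambda_2)^{k-j}$ forces the surviving contribution on the right-hand side to be the single term $j=k$, yields
$$
\MAp_k(w_\vartheta;\cdot) = \sum_{j=0}^{k} (-1)^{k-j}\binom{k}{j}\,\MAp\bigl(u[j],(v\circ\vartheta^{-1})[k-j],\ind_{B^n}[n-k];\cdot\bigr).
$$

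With this pointwise identity in hand, I would integrate against $\alpha(|y|)\,\d y$, multiply by $\binom{n}{k}/\kappa_{n-k}$ as in \eqref{eq:ozzb_map} so that the left-hand side equals $\oZZb{k}{\alpha}(w_\vartheta)$, and then average over $\vartheta\in\SO(n)$. Each of the resulting integrals on the right-hand side is precisely of the form treated by Corollary~\ref{cor:func_mix}, applied with $u_1=\cdots=u_j=u$ and $v_1=\cdots=v_{k-j}=v$, which converts the rotational average into
$$
\frac{1}{\kappa_n}\int_{\Rn}\!\int_{\Rn}\alpha(\max\{|x|,|y|\})\,\d\MAp_j(u;x)\,\d\MAp_{k-j}(v;y).
$$
Collecting the constants yields \eqref{eqdiffconv}.

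The main technical point is the passage from $(0,\infty)^2$ to $(\lambda_1,\lambda_2)=(1,-1)$; this is legitimized by identifying both expressions as polynomials in $(\lambda_1,\lambda_2)$ that agree on an open set and hence as polynomials, so that the evaluation at $(1,-1)$ is purely formal. No convexity issue arises, since the equality is first established for genuine members of $\fconvf$ and the polynomial extension is then automatic. Proceeding via the pointwise identity before integrating over $\SO(n)$ also sidesteps having to discuss the continuity or measurability of $\vartheta\mapsto w_\vartheta$ separately: the right-hand side of the pointwise identity is jointly measurable in $(\vartheta,y)$ via Lemmas~\ref{le:joint_cont} and~\ref{le:mixed_val}, so Fubini applies cleanly.
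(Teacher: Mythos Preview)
Your proof is correct and follows essentially the same route as the paper: derive the alternating-sum identity expressing $\MAp_k(w_\vartheta;\cdot)$ in terms of mixed measures of $u$ and $v\circ\vartheta^{-1}$, integrate against $\alpha(|y|)$, and then average over $\SO(n)$ via Corollary~\ref{cor:func_mix}. The only cosmetic difference is that the paper obtains the alternating sum by inductively expanding $\oVb_\alpha\big((u\diffconv(v\circ\vartheta^{-1}))[k],\ind_{B^n}[n-k]\big)$ using the epi-additivity of $\oVb_\alpha$ in each slot, whereas you obtain it by a polynomial identity in $(\lambda_1,\lambda_2)$ evaluated formally at $(1,-1)$; these are interchangeable derivations of the same combinatorial fact.
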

\begin{proof}
Since $\oVb_\alpha$ is linear with respect to inf-convolution in each of its arguments and presuming that $u\diffconv (v\circ \vartheta^{-1})$ exists for every $\vartheta\in\SO(n)$, we have
\begin{align*}
&\oZZb{k}{\alpha}\big(u \diffconv (v\circ \vartheta^{-1})\big)\\
&=\oVb_\alpha\big(u \diffconv (v\circ \vartheta^{-1})[k],\ind_{B^{n}}[n-k]\big)\\
&=\oVb_\alpha\big(u \diffconv (v\circ \vartheta^{-1})[k-1],u,\ind_{B^{n}}[n-k]\big)\\
&\qquad -\oVb_{\alpha}\big(u \diffconv (v\circ \vartheta^{-1})[n-1],(v\circ \vartheta^{-1}),\ind_{B^{n}}[n-k]\big)\\
&\;\;\vdots\\
&=\sum_{j=0}^k (-1)^{k-j}\binom{k}{j}\oVb_\alpha\big(u[j],(v\circ \vartheta^{-1})[k-j],\ind_{B^{n}}[n-k]\big),
\end{align*}
which can be proved in detail by induction on $k\in\{0,\ldots,n\}$. 
Integration over $\SO(n)$ together with an application of Corollary \ref{cor:func_mix} results in relation \eqref{eqdiffconv}.
\end{proof}

\section{Singular Hessian Integrals}
\label{se:singular_hessian}
In this section, we demonstrate another application of the special case $k=n$ of  Corollary~\ref{cor:func_mix}. Let us first state its equivalent version on $\fconvf$ (for $k=n$), where we write $\MA_j(v;\cdot)=\MA(v[j],h_{B^n}[n-j];\cdot)$ for $v\in\fconvf$ and $0\leq j\leq n$.

\begin{corollary}
\label{cor:func_mix_dual}
If $0\leq j\leq n$ and $\alpha\in C_c({[0,\infty)})$, then
\begin{multline*}
\int_{\SO(n)} \int_{\Rn} \alpha(|x|)\d\MA\big(v[j],w\circ\vartheta^{-1}[n-j];x\big) \d\vartheta\\
=\frac{1}{\kappa_n} \int_{\Rn}\int_{\Rn} \alpha(\max\{|x|,|y|\})\d\MA_{n-j}(w;y)\d\MA_j(v;x)
\end{multline*}
for $v,w\in\fconvf$.
\end{corollary}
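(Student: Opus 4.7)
The plan is to recognize that this corollary is exactly the Legendre--Fenchel transcription of Corollary~\ref{cor:func_mix} in the special case $k=n$, so the proof amounts to a careful unpacking of definitions. I will apply Corollary~\ref{cor:func_mix} with the specific choice $k=n$ and with all arguments set to conjugates: take $u_1=\cdots=u_j=v^{*}$ and $v_1=\cdots=v_{n-j}=w^{*}$, where $v^{*},w^{*}\in\fconvs$ by the duality between $\fconvf$ and $\fconvs$ recalled in Section~\ref{se:preliminaries}.

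For this choice, note that when $k=n$ the factor $\ind_{B^n}[n-k]$ disappears entirely from the left-hand side of Corollary~\ref{cor:func_mix}, leaving
$$\oVb_{\alpha}\bigl(v^{*}[j],(w^{*}\circ\vartheta^{-1})[n-j]\bigr)=\int_{\Rn}\alpha(|y|)\,\mathrm{d}\MAp\bigl(v^{*}[j],(w^{*}\circ\vartheta^{-1})[n-j];y\bigr).$$
Using the defining relation \eqref{eq:def_mixed_map} together with the conjugation identity $(u\circ\vartheta^{-1})^{*}=u^{*}\circ\vartheta^{-1}$ (which was already invoked in Lemma~\ref{le:int_son_cont}), this equals
$$\int_{\Rn}\alpha(|y|)\,\mathrm{d}\MA\bigl(v[j],(w\circ\vartheta^{-1})[n-j];y\bigr),$$
which is precisely the integrand on the left-hand side of the claim.

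On the right-hand side of Corollary~\ref{cor:func_mix}, the factors become $\MAp_{j}(v^{*}[j];x)$ and $\MAp_{n-j}(w^{*}[n-j];y)$. Expanding these using the abbreviation convention from the beginning of the preceding section, one has $\MAp_{j}(v^{*};\cdot)=\MAp(v^{*}[j],\ind_{B^n}[n-j];\cdot)=\MA(v[j],h_{B^n}[n-j];\cdot)=\MA_{j}(v;\cdot)$, and analogously $\MAp_{n-j}(w^{*};\cdot)=\MA_{n-j}(w;\cdot)$. Substituting these identifications on both sides and integrating over $\SO(n)$ yields the stated formula.

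There is essentially no obstacle: the only thing to verify carefully is the matching of the multiplicities of $\ind_{B^n}$ (respectively $h_{B^n}$) on the right-hand side, which works out because $\MAp_j$ already packages $n-j$ copies of $\ind_{B^n}$ and $\MAp_{n-j}$ already packages $j$ copies, so the total is $n$ on each side even though none appear on the left when $k=n$. The passage between $\fconvs$ and $\fconvf$ is bijective via $^{*}$, so restricting $v,w\in\fconvf$ in the statement imposes no loss of generality relative to the original corollary.
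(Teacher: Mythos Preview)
Your proposal is correct and is exactly the approach the paper takes: the paper introduces Corollary~\ref{cor:func_mix_dual} simply as ``its equivalent version on $\fconvf$ (for $k=n$)'' of Corollary~\ref{cor:func_mix}, with no separate argument given, and your unpacking via the defining relation \eqref{eq:def_mixed_map} and the conjugation identity $(u\circ\vartheta)^*=u^*\circ\vartheta$ is precisely what is implicit in that sentence. A minor notational slip: you write $\MAp_{j}(v^{*}[j];x)$ where you mean $\MAp_{j}(v^{*};x)$, but you immediately expand it correctly.
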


As mentioned in Section~\ref{se:introduction}, functional intrinsic volumes were previously defined in terms of Hessian measures. For this, let
$$D_j^n=\left\{\zeta\in C_b((0,\infty)) : \lim_{s\to 0^+} s^{n-j}\zeta(s)=0, \exists \lim_{s\to 0^+} \int_s^\infty t^{n-j-1}\zeta(t)\d t \in\R \right\}$$
for $0\leq j\leq n-1$, where $C_b((0,\infty))$ denotes the set of continuous function with bounded support on $(0,\infty)$. In addition, set 
$$D_n^n=\left\{\zeta\in C_b((0,\infty)): \exists \lim_{s\to 0^+} \zeta(s) \in \R\right\},$$
which we identify with $C_c({[0,\infty)})$. In \cite[Theorem 1.4]{Colesanti-Ludwig-Mussnig-5} and later also in \cite{Colesanti-Ludwig-Mussnig-6,Colesanti-Ludwig-Mussnig-7,Knoerr_singular}, it was shown that for $0\leq j\leq n$ and $\zeta\in D_j^n$, the map
\begin{equation}
\label{eq:hess_extend}
v\mapsto \int_{\Rn} \zeta(|x|) [\Hess v(x)]_j \d x
\end{equation}
continuously extends from $\fconvf\cap C_+^2(\Rn)$ to $\fconvf$. This extension was used as the original definition for functional intrinsic volumes, meaning they can be understood as singular Hessian integrals.

In \cite{Colesanti-Ludwig-Mussnig-7}, an alternative proof of existence for the continuous extension of \eqref{eq:hess_extend} was found. The essential observation (see \cite[Proposition 6.7]{Colesanti-Ludwig-Mussnig-7}) is that
\begin{equation}
\label{eq:connection_hess_ma_cR}
\int_{\Rn} \zeta(|x|)\d\MA(v[j],q[n-j];x) =  \int_{\Rn} \cR^{n-j} \zeta(|x|)\d\MA_j(v;x)
\end{equation}
for $0\leq j\leq n-1$, $\zeta\in D_j^n$, and $v\in C^2(\Rn)$, where
$$
\cR^{n-j} \zeta(s) = s^{n-j} \zeta(s) + (n-j) \int_s^\infty t^{n-j-1}\zeta(t) \d t
$$
for $s>0$ and where $q(x)=|x|^2/2$. If we consistently define $\cR^0\zeta=\zeta$ for $\zeta\in D^n_n$, then \eqref{eq:connection_hess_ma_cR} remains true also for $j=n$. In addition, it was previously shown in \cite[Lemma 3.8]{Colesanti-Ludwig-Mussnig-6}, that $\cR^{n-j}$ is a bijection from $D_j^n$ to $D_n^n$. Together with 
\begin{equation}
\label{eq:connection_MA_Hessian}
\binom{n}{j}\d\MA(v[j],q[n-j];x)=[\Hess v(x)]_j \d x
\end{equation}
for $v\in \fconvf\cap C^2(\Rn)$ and $0\leq j\leq n$, this then implies that \eqref{eq:hess_extend} continuously extends to $\fconvf$ (for $j=0$ we use the convention $[\Hess v(x)]_0\equiv 1$).

As we illustrate in the following, Corollary \ref{cor:func_mix_dual} gives a straightforward understanding of \eqref{eq:connection_hess_ma_cR}. Indeed, if in Corollary \ref{cor:func_mix_dual} we choose $w=q$, then it follows from the rotational symmetry of $q$ that
\begin{equation}
\label{eq:connection_hess_ma_cR2}
\int_{\Rn} \zeta(|x|) \d\MA(v[j],q[n-j];x) = \int_{\Rn} \beta(|x|) \d\MA_j(v;x)
\end{equation}
for every $0\leq j\leq n$, $\zeta\in D_n^n$, and $v\in\fconvf$,
where
$$\beta(|x|)=\frac{1}{\kappa_n} \int_{\Rn} \zeta(\max\{|x|,|y|\}) \d\MA_{n-j}(q;y)$$
for $x\in\Rn$. If $j=n$, then $\MA_0(q;\cdot)=\kappa_n\delta_0$ (see \cite[Lemma 4.6]{Hug_Mussnig_Ulivelli_supports}) implies that $\beta(|x|)=\zeta(|x|)$. If $0\le j\le n-1$, then direct calculations (see also \cite[Theorem 4.5 (d)]{Colesanti-Ludwig-Mussnig-7}) show that 
$$\d\MA_{n-j}(q;y)=\frac{n-j}{n}\frac{1}{|y|^j} \d y$$
on $\Rn\backslash\{o\}$ and, by \cite[Lemma 4.3]{Hug_Mussnig_Ulivelli_supports}, $\MA_{n-j}(q;\{o\})=V(\{o\}[n-j],B^n[j])=0$. Therefore,
\begin{align*}
\beta(|x|)&=\frac{1}{\kappa_n} \left(\zeta(|x|) \int_{|x|B^n} \d\MA_{n-j}(q;y) + \int_{\Rn\backslash |x|B^n} \zeta(|y|)\d\MA_{n-j}(q;y) \right)\\
&= \frac{1}{\kappa_n} \left( \zeta(|x|) \frac{n-j}{n} (n \kappa_n) \int_0^{|x|} r^{n-1-j} \d r + \frac{n-j}{n} (n \kappa_n) \int_{|x|}^{\infty} \zeta(r) r^{n-1-j} \d r\right)\\
&= \zeta(|x|)|x|^{n-j} + (n-j) \int_{|x|}^{\infty} \zeta(r) r^{n-j-1}\d r\\
&= \cR^{n-j}\zeta(|x|)
\end{align*}
for $x\in\Rn$. By using the properties of the transform $\cR^{n-j}$ and of the measures ${\MA(v[j],q[n-j];\cdot)}$ for $v\in C^2(\Rn)$ (see, for example, \cite[Lemma 3.1]{Colesanti-Ludwig-Mussnig-5}), one can now extend \eqref{eq:connection_hess_ma_cR2} from $\zeta\in D_n^n$ to $\zeta\in D_j^n$.

We remark that using the same method as above, similar relations can be obtained between integrals with respect to $\MA_j(v;\cdot)$ and integrals with respect to $\MA(v[j],w[n-j];\cdot)$, where $w \in \fconvf$ is rotationally symmetric.

\section{Formulas for Convex Bodies}
\label{se:formulas_bodies}
For the proof of Theorem~\ref{thm:formula_mx_area_meas} we use a connection between conjugate mixed Monge--Amp\`ere measures and mixed area measures which was established in \cite[Section 3]{Knoerr_Ulivelli} and further expanded upon in \cite[Section 4.2]{Hug_Mussnig_Ulivelli_supports}.

Let $n\geq 2$ and let $\proj_H\colon \Rn\to H$ denote the orthogonal projection onto the hyperplane $H=e_n^{\perp}$, which we will identify with $\R^{n-1}$. To each convex body $K\in\Kn$ we assign the function
$$\lfloor K \rfloor(x) = \begin{cases}
\min\{t\in\R : (x,t)\in K\}\quad&\text{if } x\in \proj_H K,\\ \infty\quad&\text{else.} \end{cases}
$$
This defines a lower semicontinuous, convex function on $\R^{n-1}$ with compact domain, and in particular, $\lfloor K \rfloor$ is an element of $\fconvsH$. Furthermore, observe that
\begin{equation}
\label{eq:floor_mink_sum_infconv}
\lfloor K + L \rfloor = \lfloor K \rfloor \infconv \lfloor L \rfloor
\end{equation}
for $K,L\in\Kn$.

In the following, we denote by
$\s^{n-1}_{-}=\{z\in\sn \colon  \langle z,e_{n}\rangle <0\}$ the lower half-sphere in $\Rn$. The gnomonic projection $\gnom\colon \s^{n-1}_-\to \R^{n-1}$ is defined by
$$\gnom(z)=\frac{(z_1,\ldots,z_{n-1})}{|z_{n}|}$$
for $z=(z_1,\ldots,z_n)\in \s^{n-1}_-$, with inverse
$$\gnom^{-1}(x)=\frac{(x,-1)}{\sqrt{1+|x|^2}}$$
for $x\in\R^{n-1}$. By \cite[Section 3.2]{Knoerr_Ulivelli} and \cite[Remark 4.7]{Hug_Mussnig_Ulivelli_supports} we have the following reformulation of \cite[Corollary 4.9]{Hug_Mussnig_Ulivelli_supports}.

\begin{lemma}
\label{le:mixed_ma_s}
If $\varphi\colon \s^{n-1}_{-} \to [0,\infty)$ is measurable, then
$$\int_{\R^{n-1}} \varphi\left(\gnom^{-1}(y)\right) \d\MAp(\lfloor K_1\rfloor,\ldots,\lfloor K_{n-1}\rfloor ;y)=\int_{\s_-^n} |\langle z,e_{n}\rangle| \varphi(z) \d S(K_1,\ldots,K_{n-1},z)$$
for $K_1,\ldots,K_{n-1}\in \Kn$.
\end{lemma}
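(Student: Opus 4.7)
The strategy is to reduce the identity to the diagonal case $K_1 = \cdots = K_{n-1} = K$ by polarization, and then to invoke the single-body identity from the cited references. Both sides of the claim are symmetric and multilinear in $(K_1,\ldots,K_{n-1})$ with respect to Minkowski combinations. On the geometric side this is the defining polynomial expansion of mixed area measures (the weight $|\langle z,e_n\rangle|\varphi(z)$ being a fixed bounded function on $\s^{n-1}_-$ does not affect multilinearity). For the functional side, the observation that $(x,t)\in\lambda K$ iff $(x/\lambda,t/\lambda)\in K$ yields $\lfloor \lambda K\rfloor = \lambda\sq\lfloor K\rfloor$; combined with \eqref{eq:floor_mink_sum_infconv} this gives
$$\lfloor \lambda_1 K_1+\cdots+\lambda_m K_m\rfloor = (\lambda_1\sq\lfloor K_1\rfloor)\infconv\cdots\infconv(\lambda_m\sq\lfloor K_m\rfloor)$$
for $\lambda_1,\ldots,\lambda_m\ge 0$. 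Applying \eqref{eq:cma_poly_exp} in dimension $n-1$ therefore expands the left-hand side of the claim (with each $\lfloor K_i\rfloor$ replaced by the inf-convolution above) as a polynomial of total degree $n-1$ in $(\lambda_1,\ldots,\lambda_m)$ whose coefficient of $\lambda_{i_1}\cdots\lambda_{i_{n-1}}$ equals $\int_{\R^{n-1}}\varphi(\gnom^{-1}(y))\d\MAp(\lfloor K_{i_1}\rfloor,\ldots,\lfloor K_{i_{n-1}}\rfloor;y)$. The right-hand side admits the analogous polynomial expansion with coefficients $\int_{\s^{n-1}_-}|\langle z,e_n\rangle|\varphi(z)\d S(K_{i_1},\ldots,K_{i_{n-1}};z)$. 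Both coefficient families being symmetric in their indices, a standard polarization argument (take $m=n-1$ and extract the coefficient of $\lambda_1\cdots\lambda_{n-1}$) reduces the claim to the diagonal case.

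It therefore remains to verify
$$\int_{\R^{n-1}}\varphi(\gnom^{-1}(y))\d\MAp(\lfloor K\rfloor;y) = \int_{\s^{n-1}_-}|\langle z,e_n\rangle|\varphi(z)\d S_{n-1}(K;z),$$
which is precisely the content of \cite[Corollary 4.9]{Hug_Mussnig_Ulivelli_supports} rephrased via the change of variables $y=\gnom(z)$ on the lower half-sphere, with the weight $|\langle z,e_n\rangle|$ arising as the corresponding Jacobian-type factor between the spherical surface-area measure and its pushforward to $\R^{n-1}$; this reformulation is explained in \cite[Section 3.2]{Knoerr_Ulivelli} and \cite[Remark 4.7]{Hug_Mussnig_Ulivelli_supports}. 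The substantive ingredient is thus the single-body identity borrowed from those references; the rest is routine polynomial matching. The only bookkeeping point to confirm is that the conjugate mixed Monge--Amp\`ere measure on $\R^{n-1}$ satisfies the polynomial-expansion property \eqref{eq:cma_poly_exp} in the same form as on $\Rn$, which is immediate from the definition since \eqref{eq:def_mixed_map} was phrased dimension-independently.
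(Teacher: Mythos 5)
Your argument is correct in substance, but it takes a slightly different route from the paper: the paper offers no polarization step at all, since it treats the lemma as a direct reformulation of \cite[Corollary 4.9]{Hug_Mussnig_Ulivelli_supports} (which is already a statement about \emph{mixed} measures), translated to the present notation via the gnomonic projection as explained in \cite[Section 3.2]{Knoerr_Ulivelli} and \cite[Remark 4.7]{Hug_Mussnig_Ulivelli_supports}. You instead borrow only the diagonal (single-body) identity from those references and recover the mixed case by multilinearity: the identities $\lfloor \lambda K\rfloor=\lambda\sq\lfloor K\rfloor$ and \eqref{eq:floor_mink_sum_infconv}, combined with \eqref{eq:cma_poly_exp} on $\R^{n-1}$ on one side and the defining expansion of mixed area measures on the other, followed by comparison of coefficients. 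That is a valid and self-contained reduction, and it has the merit of needing less from the companion paper; the paper's citation is shorter but relies on the mixed-case result being available verbatim. Two small bookkeeping points you should add: coefficient comparison requires the coefficients to be real numbers, so first run the argument for bounded $\varphi$ (both the mixed area measures and the conjugate mixed Monge--Amp\`ere measures involved have finite total mass, the latter because they are dominated by the expansion of the finite measure $\MAp\bigl(\lfloor K_1+\cdots+K_{n-1}\rfloor;\cdot\bigr)$), and then pass to general measurable $\varphi\geq 0$ by monotone convergence applied to $\varphi\wedge m$; and note that the case of some $\lambda_i=0$ is consistent because $0\sq u=\ind_{\{o\}}=\lfloor\{o\}\rfloor$, although polarization only needs positive coefficients anyway.
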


\begin{proof}[\textbf{Proof of Theorem~\ref{thm:formula_mx_area_meas}}]
Let $\beta \colon [0,1]\to [0,\infty)$ be measurable and let $\tilde\beta\colon [0,1]\to[0,\infty)$ be given by $\tilde\beta (t)=t\beta(t)$ for $t\in [0,1]$. Since $\tilde\beta(|z_n|)=0$ if $z_n=\langle z,e_n\rangle=0$, we thus obtain
\begin{align}
   &\int_{\sn} |z_n| \beta(|z_n|) \d S((K+\vartheta L)  [j],B_H^{n-1}[n-1-j],z) \notag
   \\
   &\quad = \int_{\mathbb{S}^{n-1}_-}   \tilde\beta(|z_n|) \d S((K+\vartheta L) [j],B_H^{n-1}[n-1-j],z)\label{eq:line_2}\\ 
   & \quad\quad+ \int_{\mathbb{S}^{n-1}_+}  \tilde\beta(|z_n|) \d S((K+\vartheta L)[j],B_H^{n-1}[n-1-j],z), \label{eq:line_3}
\end{align}
for $\vartheta\in \SO(n-1)$, where $\s^{n-1}_{+}=\{z\in\s^{n-1} \colon  \langle z,e_{n}\rangle >0\}$. 

Next, we want to obtain suitable representations for the integrals in \eqref{eq:line_2} and \eqref{eq:line_3} so that we can apply Theorem~\ref{thm:add_kin_form_map}. Notice that the integral in \eqref{eq:line_3} can be rewritten as an integral on $\s^{n-1}_-$. Indeed, if we denote by $\bar{K}$ and $\bar{L}$ the reflections of $K$ and $L$ through $H$, respectively, then \eqref{eq:line_3} can be written as
$$
\int_{\s^{n-1}_-} \tilde\beta(|z_n|) \d S((\bar{K}+\vartheta \bar{L}) [j],B_H^{n-1}[n-1-j],z).
$$
Here we used that the considered reflection fixes $B^{n-1}_H$ and elements of $\SO(n-1)$.

Let $u,v \in \mathrm{Conv}_{\mathrm{cd}}(\R^{n-1})$ be given by $u=\lfloor K \rfloor$ and $v=\lfloor L \rfloor$. Furthermore, let the measurable function $\alpha\colon [0,\infty)\to [0,\infty)$ be defined by the relation
\begin{align*}
\alpha(t)=\beta\left(\frac{1}{\sqrt{1+t^2}}\right)\quad \text{or equivalently}\quad
\beta(s)=\alpha\left( \frac{\sqrt{1-s^2}}{s} \right)
\end{align*}
for $t\in [0,\infty)$ and $s\in (0,1]$. By \eqref{eq:floor_mink_sum_infconv} and Lemma~\ref{le:mixed_ma_s}, applied with $\varphi(z)=\beta(|\langle z,e_n\rangle|)$ for $z\in \s^{n-1}_{-}$, we now have
\begin{align*}
&\int_{\s^{n-1}_-} \tilde\beta(|z_n|) \d S((K+\vartheta L) [j],B_H^{n-1}[n-1-j],z)\\
&\quad=\int_{\R^{n-1}} \alpha(|y|)\d\MAp_j(u \infconv (v \circ \vartheta^{-1});y),
\end{align*}
where we have used that $v \circ \vartheta^{-1}=\lfloor \vartheta L \rfloor$ and $\lfloor B_H^{n-1}\rfloor = \ind_{B^{n-1}}$. For $\bar{u}=\lfloor \bar{K} \rfloor, \bar{v}=\lfloor \bar{L} \rfloor$, we obtain analogously
\begin{align*}
    &\int_{\s^{n-1}_+}  \tilde\beta(|z_n|) \d S((K+\vartheta L) [j],B_H^{n-1}[n-1-j],z)\\
    &\quad=\int_{\s^{n-1}_-}  \tilde\beta (|z_n|) \d S((\bar{K}+\vartheta \bar{L}) [j],B_H^{n-1}[n-1-j],z)\\
    &\quad=\int_{\R^{n-1}} \alpha(|y|)\d\MAp_j(\bar{u} \infconv (\bar{v} \circ \vartheta^{-1});y).
\end{align*}
Hence we get
\begin{align}
    \label{eq:kin_for_body}
    &\int_{\sn} \tilde\beta(|z_n|) \d S((K+\vartheta L) [j],B_H^{n-1}[n-1-j],z)\\
    &\quad=\int_{\R^{n-1}} \alpha(|y|)\d\MAp_j(u \infconv (v \circ \vartheta^{-1});y) + \int_{\R^{n-1}} \alpha(|y|)\d\MAp_j(\bar{u} \infconv (\bar{v} \circ \vartheta^{-1});y).\notag
\end{align}
We now integrate \eqref{eq:kin_for_body} over $\SO(n-1)$ with respect to the Haar probability measure. Together with Theorem~\ref{thm:add_kin_form_map}, applied with respect to the ambient space $\R^{n-1}$, we infer
\begin{align}
   & \int_{\SO(n-1)} \int_{\s^{n-1}} \tilde\beta(|z_n|) \d S((K+\vartheta L) [j],B_H^{n-1}[n-1-j],z) \d\vartheta\notag \\
    &\quad= \int_{\SO(n-1)}\int_{\R^{n-1}} \alpha(|y|)\d\MAp_j(u \infconv (v \circ \vartheta^{-1});y) \d\vartheta\notag\\
    &\quad\quad +\int_{\SO(n-1)}\int_{\R^{n-1}} \alpha(|y|)\d\MAp_j(\bar{u} \infconv (\bar{v} \circ \vartheta^{-1});y) \d\vartheta\notag\\
    &\quad= \frac{1}{\kappa_{n-1}} \sum_{i=0}^j \binom{j}{i} \int_{\R^{n-1}}\int_{\R^{n-1}} \alpha(\max\{|x|,|y|\})\d\MAp_{j-i}(v;y)\d\MAp_i(u;x)\notag\\
    &\quad\quad+ \frac{1}{\kappa_{n-1}} \sum_{i=0}^j \binom{j}{i} \int_{\R^{n-1}}\int_{\R^{n-1}} \alpha(\max\{|x|,|y|\})\d\MAp_{j-i}(\bar{v};y)\d\MAp_i(\bar{u};x)\notag\\
\begin{split}
\label{eq:kin_body_A}
    &\quad= \frac{1}{\kappa_{n-1}} \sum_{i=0}^j \binom{j}{i} \int_{\s^{n-1}_-}\int_{\s^{n-1}_-} |w_n| |z_n| \beta(\min\{|w_n|,|z_n|\})\\
    &\quad\qquad\quad \d S(L [j-i],B_H^{n-1}[n-1-j+i],w) \d S(K [i],B_H^{n-1}[n-1-i],z)\\
    &\quad\quad +\frac{1}{\kappa_{n-1}} \sum_{i=0}^j \binom{j}{i} \int_{\s^{n-1}_-}\int_{\s^{n-1}_-} |w_n| |z_n| \beta(\min\{|w_n|,|z_n|\})\\
    &\quad\qquad\quad \d S(\bar{L} [j-i],B_H^{n-1}[n-1-j+i],w) \d S(\bar{K} [i],B_H^{n-1}[n-1-i],z).
\end{split}
\end{align}
In the last step we used Lemma~\ref{le:mixed_ma_s} together with the fact that $s\mapsto \sqrt{1-s^2}/s$ is decreasing and thus
$$\beta(\min\{a,b\})=\alpha\left(\max\left\{\frac{\sqrt{1-a^2}}{a},\frac{\sqrt{1-b^2}}{b}\right\} \right)$$
for $a,b\in (0,1]$. Observe that the last integral in \eqref{eq:kin_body_A} can be rewritten as
\begin{align}
\begin{split}
\label{eq:kin_body_B}
\int_{\s^{n-1}_-}&\int_{\s^{n-1}_-} |w_n| |z_n| \beta(\min\{|w_n|,|z_n|\})\\
&\qquad\quad \d S(\bar{L} [j-i],B_H^{n-1}[n-1-j+i],w) \d S(\bar{K} [i],B_H^{n-1}[n-1-i],z)\\
&=\int_{\s^{n-1}_+}\int_{\s^{n-1}_+} |w_n| |z_n| \beta(\min\{|w_n|,|z_n|\})\\
&\qquad\quad \d S(L [j-i],B_H^{n-1}[n-1-j+i],w) \d S(K [i],B_H^{n-1}[n-1-i],z)
\end{split}
\end{align}
for $0\leq i\leq j$. Similar to the above, we obtain
\begin{align}
\begin{split}
\label{eq:kin_body_C}
    &\int_{\SO(n-1)} \int_{\s^{n-1}}\tilde\beta(|z_n|) \d S((K+\vartheta \bar{L}) [j],B_H^{n-1}[n-1-j],z) \d\vartheta\\
    &\quad= \frac{1}{\kappa_{n-1}} \sum_{i=0}^j \binom{j}{i} \int_{\s^{n-1}_+}\int_{\s^{n-1}_-} |w_n| |z_n| \beta(\min\{|w_n|,|z_n|\})\\
    &\quad\qquad\quad \d S(L [j-i],B_H^{n-1}[n-1-j+i],w) \d S(K [i],B_H^{n-1}[n-1-i],z)\\
    &\quad\quad +\frac{1}{\kappa_{n-1}} \sum_{i=0}^j \binom{j}{i} \int_{\s^{n-1}_-}\int_{\s^{n-1}_+} |w_n| |z_n| \beta(\min\{|w_n|,|z_n|\})\\
    &\quad\qquad\quad \d S(L [j-i],B_H^{n-1}[n-1-j+i],w) \d S(K [i],B_H^{n-1}[n-1-i],z).
\end{split}
\end{align}
Thus, combining \eqref{eq:kin_body_A}, \eqref{eq:kin_body_B}, and \eqref{eq:kin_body_C}, we obtain
\begin{align}
    \begin{split}
        &\int_{\SO(n-1)\times \oO(1)} \int_{\s^{n-1}} \tilde\beta(|z_n|) \d S((K+\eta L) [j],B_H^{n-1}[n-1-j],z) \d\eta\\
&\quad= \frac 12 \int_{\SO(n-1)} \int_{\s^{n-1}} \tilde\beta(|z_n|) \d S((K+\vartheta L) [j],B_H^{n-1}[n-1-j],z) \d\vartheta\\
&\quad\quad+ \frac 12 \int_{\SO(n-1)} \int_{\s^{n-1}} \tilde\beta(|z_n|) \d S((K+\vartheta \bar{L}) [j],B_H^{n-1}[n-1-j],z) \d\vartheta\\
&\quad= \frac{1}{2\kappa_{n-1}} \sum_{i=0}^j \binom{j}{i} \int_{\sn}\int_{\sn} |w_n| |z_n| \beta(\min\{|w_n|,|z_n|\})\\
&\quad\qquad\quad \d S(L [j-i],B_H^{n-1}[n-1-j+i],w) \d S(K [i],B_H^{n-1}[n-1-i],z), \label{eq:kin_body_D}
    \end{split}
\end{align}
where we used that $|w_n| |z_n| \beta(\min\{|w_n|,|z_n|\})=0$  if $z\in\sn \cap e_n^{\perp}$ or $w\in\sn \cap e_n^{\perp}$. This concludes the proof.
\end{proof}

Arguing similarly as in the proof of Corollary \ref{cor:func_mix}, we obtain from Theorem~\ref{thm:formula_mx_area_meas} the following equivalent version.

\begin{corollary}
\label{corformula_mx_area_meas}
Let $n\geq 2$. If $0\leq i\leq j\leq n-1$ and $\beta \colon [0,1]\to [0,\infty)$ is measurable, then
\begin{align*}
    &\int_{\SO(n-1)\times \oO(1)} \int_{\sn} |z_n| \beta(|z_n|) \d S(K_1,\ldots,K_i,\eta L_1,\ldots,\eta L_{j-i}),B_H^{n-1}[n-1-j],z) \d\eta\\
&\quad= \frac{1}{2\kappa_{n-1}}   \int_{\sn}\int_{\sn} |w_n| |z_n| \beta(\min\{|w_n|,|z_n|\}) \d S(K_1,\ldots,K_i,B_H^{n-1}[n-1-i],z)\\
&\qquad\qquad \d S(L_1,\ldots,L_{j-i},B_H^{n-1}[n-1-j+i],w) 
\end{align*}
for $K_1,\ldots,K_i,L_1,\ldots,L_{j-i}\in\Kn$.
\end{corollary}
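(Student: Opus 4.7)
The plan is to derive Corollary~\ref{corformula_mx_area_meas} from Theorem~\ref{thm:formula_mx_area_meas} by a straightforward polarization argument in the spirit of the proof of Corollary~\ref{cor:func_mix}. Fix $0 \leq i \leq j \leq n-1$, bodies $K_1, \ldots, K_i, L_1, \ldots, L_{j-i} \in \Kn$, and nonnegative parameters $\lambda_1, \ldots, \lambda_i, \mu_1, \ldots, \mu_{j-i}$. I would apply Theorem~\ref{thm:formula_mx_area_meas} with $K := \sum_{p=1}^{i} \lambda_p K_p$ and $L := \sum_{q=1}^{j-i} \mu_q L_q$. Both sides of \eqref{eq:thm_kin_bod} then become polynomials in the variables $(\lambda_p)_p$ and $(\mu_q)_q$, and the goal is to match the coefficients of the single monomial $\lambda_1 \cdots \lambda_i \mu_1 \cdots \mu_{j-i}$, recovering exactly the identity claimed in the corollary.

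For the left-hand side, I would write $K + \eta L = \sum_p \lambda_p K_p + \sum_q \mu_q (\eta L_q)$ and expand $S((K+\eta L)[j], B_H^{n-1}[n-1-j],\cdot)$ by multilinearity of the mixed area measure. By its symmetry, the coefficient of $\lambda_1 \cdots \lambda_i \mu_1 \cdots \mu_{j-i}$ equals
$$
j!\cdot S(K_1, \ldots, K_i, \eta L_1, \ldots, \eta L_{j-i}, B_H^{n-1}[n-1-j],\cdot),
$$
where the factor $j!$ counts the number of ways to place the $j$ distinct bodies into the $j$ slots. For the right-hand side, the $i'$-th summand in \eqref{eq:thm_kin_bod} is homogeneous of degree $i'$ in the $\lambda$'s (through $S(K[i'],\cdot)$) and of degree $j-i'$ in the $\mu$'s (through $S(L[j-i'],\cdot)$), so only the summand with $i' = i$ contributes to the monomial of interest. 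Repeating the same multilinear expansion for each of these factors yields
$$
\frac{1}{2\kappa_{n-1}}\binom{j}{i}\, i!\,(j-i)!\, \int_{\sn}\int_{\sn} |w_n||z_n|\, \beta(\min\{|w_n|,|z_n|\})\, \d S(K_1,\dots,K_i,B_H^{n-1}[n-1-i],z)\, \d S(L_1,\dots,L_{j-i},B_H^{n-1}[n-1-j+i],w).
$$
Using the identity $\binom{j}{i}i!(j-i)! = j!$ and dividing both sides by $j!$ delivers the corollary.

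There is no substantial obstacle: the argument is pure coefficient extraction. The only points requiring attention are the combinatorial bookkeeping of the $j!$-factor and the standard justification that a polynomial identity valid for all nonnegative values of the parameters extends to an identity of polynomials, from which coefficient identification is legitimate. Both steps are routine once the polarization setup is fixed.
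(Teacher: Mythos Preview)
Your proposal is correct and follows essentially the same polarization strategy that the paper indicates (``arguing similarly as in the proof of Corollary~\ref{cor:func_mix}''): expand both sides of Theorem~\ref{thm:formula_mx_area_meas} multilinearly and compare coefficients. The only cosmetic difference is that you polarize in all variables $\lambda_1,\ldots,\lambda_i,\mu_1,\ldots,\mu_{j-i}$ at once, whereas the paper's template in Corollary~\ref{cor:func_mix} first separates the $K$- and $L$-parts via a single scaling parameter and then invokes multilinearity within each group; both routes are standard and equivalent.
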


\subsection*{Acknowledgments}
Parts of this project were carried out while the authors visited the Institute for Computational
and Experimental Research in Mathematics in Providence, RI, during the Harmonic Analysis and Convexity program in Fall 2022. Daniel Hug was supported by DFG research grant HU 1874/5-1
(SPP 2265). Fabian Mussnig was supported by the Austrian Science Fund (FWF): 10.55776/J4490 and 10.55776/P36210. Jacopo Ulivelli was supported by the Austrian Science Fund (FWF): 10.55776/P34446, and, in part, by the Gruppo Nazionale per l’Analisi Matematica, la Probabilit\'a e le loro Applicazioni (GNAMPA) of the Istituto Nazionale di Alta Matematica (INdAM). The first and the third author express their gratitude to the Hausdorff Research Institute for Mathematics in Bonn, Germany, where part of this work was finalized and presented while they were in-residence during Spring 2024 for the Dual Trimester Program: ``Synergies between modern probability, geometric analysis and stochastic geometry".
\footnotesize

\bigskip\bigskip\bigskip
\parindent 0pt\footnotesize

\parbox[t]{8.5cm}{
Daniel Hug\\
Institut f\"ur Stochastik\\
Karlsruhe Institute of Technology (KIT)\\
Englerstra{\ss}e 2\\
76128 Karlsruhe, Germany\\
e-mail: daniel.hug@kit.edu}

\bigskip

\parbox[t]{8.5cm}{
Fabian Mussnig\\
Institut f\"ur Diskrete Mathematik und Geometrie\\
TU Wien\\
Wiedner Hauptstra{\ss}e 8-10/1046\\
1040 Wien, Austria\\
e-mail: fabian.mussnig@tuwien.ac.at}

\bigskip

\parbox[t]{8.5cm}{
Jacopo Ulivelli\\
Institut f\"ur Diskrete Mathematik und Geometrie\\
TU Wien\\
Wiedner Hauptstra{\ss}e 8-10/1046\\
1040 Wien, Austria\\
e-mail: jacopo.ulivelli@tuwien.ac.at}

\end{document}